\title[A dichotomy for BD equivalence of Delone sets]{A dichotomy for bounded displacement equivalence of Delone sets}
\date{}
\author{Yotam Smilansky}
\address{Yotam Smilansky\newline\indent Department of Mathematics, Rutgers University, NJ, USA. \newline\indent {\tt yotam.smilansky@rutgers.edu}}
\author{Yaar Solomon}
\address{Yaar Solomon\newline\indent Department of Mathematics, Ben-Gurion University of the Negev, Israel.\newline\indent {\tt yaars@bgu.ac.il}}
\newcommand{\N}{{\mathbb{N}}}
\newcommand{\Z}{{\mathbb{Z}}}
\newcommand{\R}{{\mathbb{R}}}
\newcommand{\X}{{\mathbb{X}}}
\newcommand{\EE}{{\mathcal{E}}}
\newcommand{\GG}{{\mathcal{G}}}
\newcommand{\QQ}{\mathcal{Q}}
\renewcommand{\aa}{\mathbf{a}}
\newcommand{\pp}{\mathbf{p}}
\newcommand{\qq}{\mathbf{q}}
\newcommand{\vv}{\mathbf{v}}
\newcommand{\uu}{\mathbf{u}}
\newcommand{\xx}{\mathbf{x}}
\newcommand{\yy}{\mathbf{y}}
\newcommand{\zz}{\mathbf{z}}
\newcommand{\CCC}{\mathscr{C}}
\newcommand{\cont}{2^{\aleph_0}}
\newcommand{\bdX}{ \mathrm{BD}(\X) }
\newcommand{\df}{{\, \stackrel{\mathrm{def}}{=}\, }}
\newcommand{\dist}{\mathbf{dist}}
\newcommand{\absolute}[1] {\left|{#1}\right|}
\newcommand{\norm}[1]{\left\|{#1}\right\|}
\newcommand{\vol}{\mathrm{vol}}
\newcommand{\ignore}[1]  {}
\theoremstyle{plain}
\newtheorem{thm}{Theorem}[section]
\newtheorem{lem}[thm]{Lemma}
\newtheorem{prop}[thm]{Proposition}
\newtheorem{cor}[thm]{Corollary}
\theoremstyle{definition}
\newtheorem*{question}{Question}
\numberwithin{equation}{section}
\newif\ifdraft\drafttrue
\begin{document}
\begin{abstract}
      We prove that in every compact space of Delone sets in $\R^d$ which is minimal with respect to the action by translations, either all Delone sets are uniformly spread, or continuously many distinct bounded displacement equivalence classes are represented, none of which contains a lattice. The implied limits are taken with respect to the Chabauty--Fell topology, which is the natural topology on the space of closed subsets of $\R^d$. This topology coincides with the standard local topology in the finite local complexity setting, and it follows that the dichotomy holds for all minimal spaces of Delone sets associated with well-studied constructions such as cut-and-project sets and substitution tilings, whether or not finite local complexity is assumed. 
\end{abstract}

\maketitle

\section{Introduction}\label{sec:introduction}
A set $\Lambda \subset \R^d$ is called a \emph{Delone set} if it is both \emph{uniformly discrete} and \emph{relatively dense}, that is, if there are constants $r,R>0$ so that every ball of radius $r$ contains at most one point of $\Lambda$ and $\Lambda$ intersects every ball of radius $R$. We refer to $r$ and $R$ as the \emph{separation constant} and the \emph{packing radius} of $\Lambda$, respectively. 
Two Delone sets $\Lambda, \Gamma \subset \R^d$ are said to be \emph{bounded displacement (BD) equivalent}
if there exists a bijection $\phi: \Lambda \to \Gamma$ satisfying 
\[\sup_{\xx\in \Lambda}\norm{\xx-\phi(\xx)}<\infty.\] 
Such a mapping $\phi$ is called a \emph{BD-map}. Note that since norms in $\R^d$ are equivalent, this definition does not depend on the choice of norm. 
Lattices in $\R^d$ with the same covolume are BD-equivalent, and  a Delone set $\Lambda$ is called \emph{uniformly spread} if it is equivalent to a lattice, or equivalently, if there is a BD-map $\phi: \Lambda \to \alpha\Z^d$, for some $\alpha>0$.

Fix a metric $\rho$ on $\R^d$ and consider the space $\CCC(\R^d)$ of closed subsets of $\R^d$. The \textit{Chabauty--Fell topology} on $\CCC(\R^d)$ is the topology induced by the metric (see Appendix \ref{Appendix}) 
\begin{equation}\label{eq:CF-metric}
D\left(\Lambda_0, \Lambda_1\right) \df 
\inf\left(\left\{ \varepsilon>0 \: \Big| \: \begin{matrix}\Lambda_0 \cap B({\bf0},1/\varepsilon)\subset \Lambda_1^{(+\varepsilon)}\\
\Lambda_1 \cap B({\bf0},1/\varepsilon)\subset \Lambda_0^{(+\varepsilon)}
\end{matrix}\right\} \cup\{1\}\right),
\end{equation}
where $B(\xx,R)$ is the open ball of radius $R>0$ centered at $\xx\in\R^d$ with respect to the metric $\rho$, and $A^{(+\varepsilon)}$ is the $\varepsilon$ neighborhood of the set $A$. In this work we only consider metrics $\rho$ that are determined by norms on $\R^d$, and although different choices of norms result in different metrics $D$, they all define the same  
topology. We remark that in the aperiodic order literature, this topology, which was introduced by Chabauty \cite{Ch} for $\CCC(\R^d)$ as well as for a more general setting, and later extended by Fell \cite{Fe}, is often referred to as the \textit{natural topology} or the \textit{local rubber topology}, see e.g. \cite[\S 5]{BaakeGrimm}. See also \cite{Lenz-Stollmann} for the relation to the Hausdorff metric.

Delone sets in $\R^d$ are elements of $\CCC(\R^d)$, and we may consider compact spaces of Delone sets, where the implied limits are taken with respect to the Chabauty--Fell topology. Such a space $\X$ of Delone sets in $\R^d$ is \textit{minimal} with respect to the $\R^d$ action by translations if the orbit closure of every Delone set $\Lambda\in\X$ is dense in $\X$.  Minimality of $\X$ is equivalent to the recurrence of {\it patches} in each Delone set $\Lambda\in\X$, where a patch is a finite subset of a Delone set. This important geometric consequence of minimality is called {\it almost repetitivity}, and a precise definition and additional details will be given in \S \ref{sec:topology}. For a proof of this equivalence see \cite[Theorem 3.11]{Frettloh-Richard} and \cite[Theorem 6.5]{Smilansky-Solomon}, and see also the discussion included in \cite{Kellendonk-Lenz}.

Denote the cardinality of the set of BD-equivalence classes represented in $\X$ by $\bdX$. The following dichotomy is our main result.

\begin{thm}\label{thm:main_result_general}
	Let $\X$ be a space of Delone sets in $\R^d$, and assume it is compact with respect to the Chabauty--Fell topology and minimal with respect to the action of $\R^d$ by translations. Then either 
	\begin{enumerate}
		\item 
		there exists a uniformly spread Delone set in $\X$ (and so every $\Lambda \in \X$ is uniformly spread and $\bdX = 1$), \\
		\hspace{2in} or
		\item
		$\bdX = \cont$,
	\end{enumerate} 
 	where $\cont$ denotes the cardinality of the continuum.
\end{thm}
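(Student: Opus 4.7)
The plan is to prove the contrapositive: if case (1) fails---meaning no $\Lambda\in\X$ is uniformly spread---then $\bdX=\cont$. I would first verify the parenthetical in (1), that uniform spread propagates within $\X$. Recall Laczkovich's cube-discrepancy criterion (and its Delone-set adaptations by Burago--Kleiner and McMullen): a Delone set $\Lambda$ of density $\alpha$ is uniformly spread iff
\[
\sup_Q\bigl|\#(\Lambda\cap Q)-\alpha\vol(Q)\bigr|<\infty,
\]
with $Q$ ranging over cubes in $\R^d$; and two Delone sets of equal density are BD-equivalent iff $\sup_Q|\#(\Lambda\cap Q)-\#(\Gamma\cap Q)|<\infty$. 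Given a uniformly spread $\Lambda_0\in\X$ and any $\Lambda\in\X$, minimality (hence almost repetitivity) yields, for every large cube $Q$, a translation $\tau$ making $(\Lambda_0-\tau)\cap Q$ arbitrarily close to $\Lambda\cap Q$ in Chabauty--Fell. A boundary-slab estimate, comparing against slightly inflated and deflated cubes to absorb the $\varepsilon$-perturbation near $\partial Q$, transfers the bounded cube discrepancy of $\Lambda_0$ to $\Lambda$, forcing $\Lambda$ to be uniformly spread.

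Assume now that no $\Lambda\in\X$ is uniformly spread, and fix $\Lambda\in\X$ of density $\alpha$. By the above characterization there exist cubes $Q_n$ with $|\#(\Lambda\cap Q_n)-\alpha\vol(Q_n)|\to\infty$. Translating so that each $Q_n$ is centered at the origin and extracting a convergent subsequence using compactness of $\X$ yields a limit point in $\X$ with unbounded discrepancy at origin-centered cubes of controlled radii. The key tool I would next develop is a \emph{discrepancy selection principle}: for each sufficiently large scale $s$ and each integer target $v$ in a suitable interval of achievable discrepancy values, there exists $\Gamma\in\X$ whose discrepancy on a fixed reference cube of scale $s$ equals $v$ up to an absolute constant. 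This follows by sliding the cube along an orbit (the count changes by $\pm 1$ as the boundary crosses points), combined with compactness and minimality.

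Iterating the selection principle at an exponentially growing sequence of scales $s_n$, I would construct, for each $S$ in a $\cont$-sized almost disjoint family $\FF\subset 2^{\N}$, an element $\Lambda_S\in\X$ whose scale-$s_n$ discrepancy exceeds a threshold $M_n$ precisely when $n\in S$, with $M_n\to\infty$ chosen fast enough that selections made at later scales cannot overwrite earlier signatures. For $S\neq S'$ in $\FF$ the symmetric difference $S\triangle S'$ is infinite, so $\Lambda_S$ and $\Lambda_{S'}$ have unbounded mutual cube discrepancy and therefore lie in distinct BD classes. This yields $\bdX\geq\cont$, and the reverse inequality $\bdX\leq\cont$ follows from $|\X|\leq\cont$. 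The main obstacle is engineering the iteration so that each $\Lambda_S$ actually lies in $\X$ and retains its prescribed discrepancy signature under the perturbative effect of later-scale selections, which calls for a delicate buffering argument with rapidly growing $s_n$ and $M_n$. A secondary subtlety is that in the non-FLC Chabauty--Fell setting patches match only approximately, so the selection principle must be formulated $\varepsilon$-robustly; the integer-valued discrepancy absorbs these errors at the cost of modest quantitative adjustments.
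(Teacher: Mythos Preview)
Your proposal contains a genuine error in the statement of Laczkovich's criterion that undermines the whole argument in dimension $d\ge 2$. The correct characterization is not that $\sup_Q|\#(\Lambda\cap Q)-\alpha\vol(Q)|<\infty$, but rather that
\[
\sup_{A\in\QQ_d^*}\frac{|\#(\Lambda\cap A)-\alpha\vol(A)|}{\vol_{d-1}(\partial A)}<\infty,
\]
and similarly two Delone sets are BD-equivalent iff the analogous ratio with $\#(\Gamma\cap A)$ in place of $\alpha\vol(A)$ stays bounded (this is exactly the paper's Theorem~\ref{thm:non-BD_consequence}). For $d\ge 2$ even a lattice has cube discrepancy of order the side length to the $(d-1)$st power, hence unbounded in your sense. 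Consequently your ``discrepancy signature'' mechanism fails: having $|\#(\Lambda_S\cap Q)-\#(\Lambda_{S'}\cap Q)|$ unbounded along a sequence of cubes does \emph{not} certify BD-non-equivalence, and conversely your starting input---cubes $Q_n$ with $|\#(\Lambda\cap Q_n)-\alpha\vol(Q_n)|\to\infty$---is available for every Delone set and carries no information about uniform spread.

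Fixing the normalization forces a substantially different construction. Once the denominator $\vol_{d-1}(\partial A)$ is present, your intermediate-value ``sliding'' argument no longer produces arbitrary signatures, and the buffering you allude to must control ratios, not raw counts. The paper handles this by first proving (Proposition~\ref{prop:finding_traget_patches}) that a non-uniformly-spread $\Lambda$ admits a single sequence of shapes $A_m\in\QQ_d^*$ together with translation vectors $\xx_m$ such that
\[
\frac{|\#(\Lambda\cap A_m)-\#(\Lambda\cap(A_m+\xx_m))|}{\vol_{d-1}(\partial A_m)}\to\infty,
\]
so the two alternatives at each scale are both patches of $\Lambda$ itself. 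One then takes, for each binary word $\omega$, the Chabauty--Fell limit of suitably recentered patches $(\Lambda\cap C_m)-\uu_m$ with $C_m\in\{A_m,A_m+\xx_m\}$; almost repetitivity (Lemma~\ref{lem:uniform_repetitivity}) lets one nest these so the limit exists and lies in $\X$, and a careful choice of growth rates (your ``buffering'') guarantees that the normalized discrepancy between $\Lambda_\eta$ and $\Lambda_\sigma$ blows up whenever $\eta$ and $\sigma$ differ infinitely often. Your outline has the right architecture at the coarse level (binary coding, fast-growing scales, limits in $\X$), but the missing normalization is not a cosmetic fix: it dictates working with $\QQ_d^*$ rather than cubes, comparing two patches of the \emph{same} $\Lambda$ rather than targeting absolute discrepancy values, and tracking error terms of size $\vol_{d-1}(\partial A_m)$ throughout.
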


Observe that the minimality assumption is essential, as shown by the following simple example. Consider $\Lambda = (-2\N) \sqcup \{0\} \sqcup \N$, a Delone set in $\R$. Then the orbit closure $\X$ of $\Lambda$ under translations by $\R$ and with respect to the Chabauty--Fell topology, consists of translations of $\Lambda$, the orbit closure of $\Z$ and the orbit closure of $2\Z$. Therefore $\bdX = 3$, and indeed $\X$ is not minimal.

Let us describe the proof of Theorem \ref{thm:main_result_general}. The implication in the brackets of (1) is a direct consequence of \cite[Theorem 1.1]{Laczk}, see also \cite[Theorem 3.2]{FG} for a sketch of a similar proof that holds for general minimal spaces of Delone sets. A uniformly discrete set in $\R^d$ with separation constant $r>0$ is BD-equivalent to a subset of the lattice $\frac{r}{2}\Z^d$, hence the upper bound $\bdX \le \cont $ is trivial.  We prove the remaining implication according to the following steps. Given a non-uniformly spread Delone set in a minimal space $\X$, we construct in \S \ref{sec:construction_of_the_patches} a sequence of pairs of patches consisting of an increasingly deviant number of points. by \S \ref{sec:topology}, choosing a patch from each pair, which corresponds to the choice of a word on a two letter alphabet, gives rise to a Delone sets in $\X$  with certain properties. Finally, it is shown in \S \ref{sec:proof of main} that using the equivalent condition for non-BD equivalence of two Delone sets established in \S \ref{sec:conditions_for_BD-non-equivalence}, two Delone sets defined using words that differ in infinitely many places are BD-non-equivalent, and so $\X$ contains continuously many BD-equivalence classes. y many places must be BD-non-equivalent.

Recall that a Delone set $\Lambda$ has \emph{finite local complexity (FLC)} if for every $R>0$ the number of distinct patterns that are contained in balls of radius $R$ in $\Lambda$ up to translations is finite. In such case every Delone set in the orbit closure of $\Lambda$ under translations, sometimes called the \emph{hull} of $\Lambda$, also has FLC. The hull itself is then called FLC, and the Chabauty--Fell topology on $\X$ coincides with the \emph{local topology}, see \cite[\S 5]{BaakeGrimm}. It follows that Theorem \ref{thm:main_result_general} holds also for FLC spaces with respect to the local topology, and constitutes a new result both in the FLC and non-FLC setup. In particular, it answers question (1) in \cite[\S 7]{FG} in the strongest possible way. 

In addition to Theorem \ref{thm:main_result_general}, we establish in Theorem \ref{thm:non-BD_consequence} a useful equivalent condition for two Delone sets to be non-BD equivalent. This result is the converse of the implication of Theorem \ref{thm:non-BD_condition} which first appeared in \cite{FSS}, and may be of interest in its own right.

Delone sets are mathematical models of atomic positions, and BD-equivalence offers a natural way of classifying them. BD-equivalence for general discrete point sets was previously considered mainly in the context of uniformly spread point sets, see e.g. \cite{DO1, DO2, Laczk} and \cite{DSS}. 
In recent years, BD-equivalence emerged as an object of study for Delone sets that appear in the study of mathematical quasicrystals and aperiodic order, see \cite{BaakeGrimm} for a comprehensive introduction to such constructions. 
For cut-and-project sets, BD-equivalence was studied in \cite{HKW}, and links to the notions of \emph{bounded remainder sets} and \emph{pattern equivariant cohomology} appeared in \cite{FG, HK,HKK} and in \cite{KS1,KS2}, respectively. For Delone sets associated with substitution tilings, sufficient conditions for a set to be uniformly spread were provided in \cite{ACG}, \cite{Solomon11} and \cite{Solomon14}. In addition, for the multiscale substitution tilings introduced by the authors in \cite{Smilansky-Solomon}, it was shown that any Delone set associated with an incommensurable tiling cannot be uniformly spread.

Recently, questions regarding BD-non-equivalence between two Delone sets were considered in \cite{FSS}, where a sufficient condition for BD-non-equivalence was established. It was later shown in \cite{Solomon20}  that if the eigenvalues and eigenspaces of the substitution matrix satisfy a certain condition, then the corresponding substitution tiling space contains continuously many distinct BD-classes.

The following less restrictive equivalence relation on Delone sets is often studied in parallel to the BD-equivalence relation. We say that two Delone sets $\Lambda$ and $\Gamma$ are \emph{biLipschitz (BL) equivalent} if there exists a biLipschitz bijection between them. Namely, a bijection $\varphi:\Lambda \to \Gamma$ and a constant $C\ge 1$ so that 
\[\forall \xx,\yy\in \Lambda \qquad \frac1C \le \frac{\norm{\varphi(\xx) - \varphi(\yy)}}{\norm{\xx - \yy}} \le C. \]
It was shown by Burago and Kleiner \cite{BK1} and independently by McMullen  \cite{McMullen}, that there exist Delone sets in $\R^d$, $d\ge 2$, that are not BL-equivalent to a lattice in $\R^d$. It was shown in \cite{Magazinov} that there are continuously many Delone sets that are pairwise BL-non-equivalent, and a hierarchy of equivalence relations on Delone sets, which includes BD and BL equivalence, was recently introduced in \cite{DK}. It would be interesting to obtain an analogue of our Theorem \ref{thm:main_result_general} in this context.   
\begin{question}
	Does Theorem \ref{thm:main_result_general} hold if  BD-equivalence is replaced by BL-equivalence?
\end{question}
In view of the sufficient condition for BL-equivalence to a lattice given by Burago and Kleiner in \cite{BK2} and the constructions in \cite{Cortez-Navas} and \cite{Magazinov}, we remark that the results given in \S\ref{sec:topology} and \S\ref{sec:construction_of_the_patches} regarding densities and discrepancy estimates may be relevant also in the study of BL-non-equivalence and the question stated above.

\subsection{Consequences of Theorem \ref{thm:main_result_general}} 
Theorem \ref{thm:main_result_general} directly implies that $\bdX = \cont$ for many special families of minimal spaces of Delone sets which are central in the theory of aperiodic order, and for which the BD-equivalence relation was previously considered.  

\subsubsection{Substitution tilings:}
	For primitive substitution tilings of $\R^d$, we denote by $\lambda_1>\absolute{\lambda_2} \ge\ldots\ge  \absolute{\lambda_n}$ the eigenvalues of the substitution matrix, and we let $t\ge 2$ be the minimal index such that the eigenspace of $\lambda_t$ contains non-zero vectors whose sum of coordinates is not zero. 
	Under the assumption that tiles are bi-Lipschitz homeomorphic to closed balls, it was shown in \cite[Theorem 1.2 (I)]{Solomon14} that if
	\begin{equation}\label{eq:eigenvalues_of_substitution_for_non-BD}
	\absolute{\lambda_t} > \lambda_1^{(d-1)/d}
	\end{equation} 
	then the Delone sets corresponding to the tilings in the tiling space are not uniformly spread. Under the assumption \eqref{eq:eigenvalues_of_substitution_for_non-BD} and an additional assumption regarding the existence of certain patches, it was recently shown in \cite{Solomon20} that $\bdX = \cont$. Given the above result of \cite{Solomon14}, and since substitution tiling spaces are minimal (see \cite{BaakeGrimm}), the following strengthening of the main result of \cite{Solomon20} is a direct consequence of our Theorem \ref{thm:main_result_general}. 
	\begin{cor}
		Let $\X$ be a primitive substitution tiling space with tilings by tiles that are bi-Lipschitz homeomorphic to closed balls. Assume that condition \eqref{eq:eigenvalues_of_substitution_for_non-BD} holds,  then $\bdX = \cont$.
	\end{cor}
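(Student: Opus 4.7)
The plan is to derive the corollary essentially as a direct application of \Cref{thm:main_result_general} to the tiling space $\X$, once the hypotheses of the dichotomy and the exclusion of alternative (1) are checked. So the work is to verify three things: that $\X$ (or rather, the space of Delone sets obtained by choosing a control point in each tile) is compact and minimal in the Chabauty--Fell topology, that condition \eqref{eq:eigenvalues_of_substitution_for_non-BD} forces alternative (1) of \Cref{thm:main_result_general} to fail, and then to invoke the theorem.

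First I would recall that for a primitive substitution, the hull $\X$ is compact and minimal with respect to the $\R^d$ translation action; this is standard and the reference \cite{BaakeGrimm} already cited in the paragraph preceding the corollary provides it. Passing from tilings to Delone sets is harmless: picking a control point in each prototile produces a Delone set for every tiling in $\X$, and primitivity together with the bi-Lipschitz-to-a-ball hypothesis on tiles ensures the resulting space of Delone sets is compact and minimal in the Chabauty--Fell topology, so \Cref{thm:main_result_general} applies.

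Second, the Solomon14 result cited just above the corollary, namely \cite[Theorem 1.2 (I)]{Solomon14}, states that under the very hypotheses of the corollary (bi-Lipschitz-to-a-ball tiles plus \eqref{eq:eigenvalues_of_substitution_for_non-BD}), no Delone set associated with a tiling in $\X$ is uniformly spread. This rules out alternative (1) of \Cref{thm:main_result_general}. By minimality and the dichotomy, alternative (2) must hold, giving $\bdX = \cont$ as claimed.

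There is essentially no technical obstacle here once \Cref{thm:main_result_general} is in hand; the only point requiring a sentence of care is the translation between ``tiling space'' and ``space of Delone sets'' so that the hypotheses of \Cref{thm:main_result_general} are literally satisfied. All of the genuine work, the construction of deviant patches, the almost-repetitivity combinatorics, and the BD-non-equivalence criterion, has been done in the proof of \Cref{thm:main_result_general} itself, and the role of the corollary is simply to observe that the substitution hypothesis together with \eqref{eq:eigenvalues_of_substitution_for_non-BD} places $\X$ in case (2) of the dichotomy.
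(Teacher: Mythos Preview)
Your proposal is correct and matches the paper's own justification: the corollary is stated there as a direct consequence of \Cref{thm:main_result_general}, using minimality of primitive substitution tiling spaces from \cite{BaakeGrimm} and the non-uniformly-spread conclusion of \cite[Theorem 1.2 (I)]{Solomon14} to exclude alternative (1). Your additional remark about passing from tilings to Delone sets via control points is appropriate but not elaborated in the paper, which treats this identification as understood.
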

	Note that in the context of tilings, we say that two tilings are BD-equivalent if their corresponding Delone sets, which are obtained by picking a point from each tile, are BD-equivalent. 
	In addition to the above, \cite{Solomon14} contains an example of a substitution rule, for which the eigenvalues of the substitution matrix satisfy 
	\begin{equation}\label{eq:eigenvalues_of_substitution_with_equality}
	\absolute{\lambda_2} = \lambda_1^{(d-1)/d}
	\end{equation} 
	and the corresponding Delone sets are not uniformly spread, see \cite[Theorem 1.2 (III)]{Solomon14}. Note that in this example the main result of \cite{Solomon20} cannot be applied. \begin{cor}
		There exists a primitive substitution tiling space $\X$ for which condition  \eqref{eq:eigenvalues_of_substitution_with_equality} holds and $\bdX = \cont$.
	\end{cor}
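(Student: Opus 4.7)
The plan is to invoke directly the example exhibited in \cite[Theorem 1.2 (III)]{Solomon14} and feed it into Theorem \ref{thm:main_result_general}. First I would recall that every primitive substitution tiling space $\X$ is compact with respect to the Chabauty--Fell topology and minimal with respect to the $\R^d$ action by translations (see \cite{BaakeGrimm}), so $\X$ meets the hypotheses of Theorem \ref{thm:main_result_general}.

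Next I would take as $\X$ the specific primitive substitution tiling space furnished by \cite[Theorem 1.2 (III)]{Solomon14}; by construction its substitution matrix satisfies $|\lambda_2| = \lambda_1^{(d-1)/d}$, i.e.\ condition \eqref{eq:eigenvalues_of_substitution_with_equality}, and the Delone sets associated with its tilings (obtained by picking a representative point from each tile) are not uniformly spread. Passing to the associated Delone-set hull, which is itself compact and minimal under translations, I obtain a minimal Chabauty--Fell-compact space of Delone sets containing a non-uniformly-spread element.

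The dichotomy of Theorem \ref{thm:main_result_general} now applies: alternative (1) is ruled out because the implication inside the brackets would force every Delone set in $\X$ to be uniformly spread, contradicting the property supplied by \cite[Theorem 1.2 (III)]{Solomon14}. Hence alternative (2) must hold, giving $\bdX=\cont$, as required.

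There is essentially no main obstacle: all the hard work has been done in \cite{Solomon14} (producing the example with equality in the eigenvalue condition and certifying non-uniform-spreadness) and in Theorem \ref{thm:main_result_general} itself (providing the dichotomy). The only thing to check carefully is the translation between the tiling-space setup of \cite{Solomon14} and the Delone-set setup of Theorem \ref{thm:main_result_general}, namely that a primitive substitution tiling space yields a minimal, Chabauty--Fell-compact Delone hull once one fixes a choice of control point per tile; this is standard and already implicit in the conventions adopted just before the first corollary.
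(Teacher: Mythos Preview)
Your proposal is correct and follows essentially the same reasoning as the paper: the corollary is stated there as a direct consequence of the example in \cite[Theorem~1.2~(III)]{Solomon14} (which satisfies \eqref{eq:eigenvalues_of_substitution_with_equality} and yields non-uniformly-spread Delone sets) together with the minimality of primitive substitution tiling spaces and Theorem~\ref{thm:main_result_general}. There is nothing to add.
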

	
\subsubsection{Cut-and-project sets:}
	Theorem 1.2 in \cite{HKW} concerns the BD-equivalence relation in the context of cut-and-project sets that arise from linear toral flows (which constitute an equivalent method of constructing cut-and-project sets, see \cite[Proposition 2.3]{ASW}). Since the hull of a cut-and-project set is minimal, the corollary below follows directly from \cite[Theorem 1.2 (III)]{HKW} and our Theorem \ref{thm:main_result_general}. We refer to \cite{HKW} for more details on the construction and terminology.
	\begin{cor}
		For almost every $(k-d)$-dimensional linear section $S$, which is a parallelotope in the $k$-dimensional torus, there is a
		residual set of $d$-dimensional subspaces V for which the hull of the corresponding cut-and-project set contains continuously many distinct BD-classes.
	\end{cor}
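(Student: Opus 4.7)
The plan is to deduce the corollary directly from \cite[Theorem 1.2 (III)]{HKW} together with our Theorem~\ref{thm:main_result_general}, essentially as a plug-in application. The argument splits into three short steps.

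First, I would apply \cite[Theorem 1.2 (III)]{HKW} to produce the required ``bad'' parameter set: for almost every $(k-d)$-dimensional section $S$ that is a parallelotope in $\mathbb{T}^k$, there is a residual set of $d$-dimensional subspaces $V$ for which the associated cut-and-project set $\Lambda = \Lambda(S,V)$ fails to be uniformly spread, i.e.\ is not BD-equivalent to any lattice in $\R^d$. This is exactly the input our main theorem needs.

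Second, I would invoke the standard fact that the hull $\X$ of a cut-and-project set, defined as the orbit closure of $\Lambda$ in $\CCC(\R^d)$ under $\R^d$-translations, is compact with respect to the Chabauty--Fell topology and minimal with respect to the translation action. Compactness follows from uniform discreteness plus relative denseness, which are preserved under Chabauty--Fell limits; minimality follows from the almost repetitivity (``repetitivity of windows'') of cut-and-project sets, which is the property discussed in \S\ref{sec:topology} and in \cite{BaakeGrimm}. Thus the non-uniformly-spread Delone set produced in Step 1 sits inside a compact minimal space of Delone sets $\X$.

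Third, I would apply Theorem~\ref{thm:main_result_general} to this $\X$. Since $\X$ contains an element that is not uniformly spread, case (1) is excluded, so case (2) must hold, giving $\bdX = \cont$. This is precisely the conclusion of the corollary.

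The only real point to double-check is that the hull considered in \cite{HKW} agrees with the Chabauty--Fell orbit closure used here; this is routine, since the cut-and-project literature works either in the local topology (when finite local complexity holds) or directly in the Chabauty--Fell topology, and these coincide whenever FLC is available, as noted in the paragraph following Theorem~\ref{thm:main_result_general}. Beyond this bookkeeping there is no additional mathematical content to supply.
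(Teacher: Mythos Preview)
Your proposal is correct and follows essentially the same route as the paper: the corollary is stated there as an immediate consequence of \cite[Theorem 1.2 (III)]{HKW} combined with Theorem~\ref{thm:main_result_general}, using only that the hull of a cut-and-project set is minimal. Your additional remarks about compactness and the agreement of topologies are reasonable bookkeeping but go slightly beyond what the paper spells out.
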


	 The \emph{half-Fibonacci sets} were introduced in \cite[\S 6]{FG}. These are cut-and-project sets in $\R$ that belong to the same hull and are BD-non-equivalent. In particular, they are not uniformly spread (see \cite[Theorem 3.2]{FG}). We thus obtain the following result. 
	\begin{cor}
		Let $\X$ be the hull of the half-Fibonacci sets from \cite{FG}. Then $\bdX = \cont$.
	\end{cor}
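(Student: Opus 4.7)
The plan is a direct application of Theorem \ref{thm:main_result_general}: I need only verify its hypotheses for the hull $\X$ and then rule out alternative (1) of the dichotomy.

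For the hypotheses, $\X$ is the hull of a cut-and-project set in $\R$, hence compact in the Chabauty--Fell topology and minimal under the $\R$-action by translations. These are standard properties of cut-and-project hulls, consistent with the preceding subsection where minimality of such hulls was already invoked, so no further argument is required beyond citing that general theory.

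For ruling out alternative (1), I would invoke the fact (explicitly recorded in the paragraph preceding the corollary and established in \cite[Theorem 3.2]{FG}) that the half-Fibonacci sets are not uniformly spread. Since $\X$ contains such a Delone set, alternative (1) cannot hold; equivalently, its parenthetical conclusion would force every element of $\X$, in particular the half-Fibonacci sets themselves, to be uniformly spread, contradicting \cite[Theorem 3.2]{FG}. Therefore alternative (2) must hold, yielding $\bdX = \cont$.

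There is no substantive obstacle in this deduction: the entire combinatorial and topological work has been carried out in Theorem \ref{thm:main_result_general}, and the non-uniform-spreadness that triggers its second alternative is supplied by the construction in \cite{FG}. The corollary is thus a clean one-step consequence of the dichotomy applied to a minimal hull already known to contain a non-uniformly-spread element.
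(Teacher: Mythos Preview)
Your proposal is correct and follows exactly the paper's own reasoning: the corollary is stated without a separate proof because, as the paragraph preceding it explains, the half-Fibonacci sets lie in a minimal cut-and-project hull and are not uniformly spread (via \cite[Theorem 3.2]{FG}), so Theorem~\ref{thm:main_result_general} immediately forces alternative~(2).
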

	
\subsubsection{Multiscale substitution tilings:}
	Multiscale substitution tilings were recently studied in \cite{Smilansky-Solomon}. Under an incommensurability assumption on the underlying substitution scheme the corresponding tiling spaces are minimal \cite[\S 6]{Smilansky-Solomon}, and combined with a mild assumption on the boundaries of the prototiles which holds for example for polygonal tiles, their associated Delone sets, which are never FLC, are also never uniformly spread \cite[\S 8]{Smilansky-Solomon}.  
	\begin{cor}
		Let $\X$ be an incommensurable multiscale polygonal tiling space. Then $\bdX = \cont$.
	\end{cor}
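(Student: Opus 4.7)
The plan is to deduce the corollary directly from Theorem \ref{thm:main_result_general} by verifying its hypotheses and ruling out alternative (1). First, I would cite \cite[\S 6]{Smilansky-Solomon}, where under the incommensurability assumption the multiscale substitution tiling space $\X$ is shown to be compact in the Chabauty--Fell topology and minimal with respect to the $\R^d$-action by translations. This places $\X$ in exactly the setup required by Theorem \ref{thm:main_result_general}.

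Next, I would invoke the results of \cite[\S 8]{Smilansky-Solomon}, which establish that the mild assumption on prototile boundaries---satisfied in particular for polygonal prototiles---together with incommensurability, forces the Delone set associated with each tiling in $\X$ (formed by picking a point from each tile, in the convention adopted earlier in the introduction) to be non-uniformly spread. Consequently, alternative (1) of Theorem \ref{thm:main_result_general} cannot hold, and alternative (2) must, yielding $\bdX = \cont$.

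The main obstacle, such as it is, is purely notational: one has to verify that the passage from tilings to Delone sets is compatible with the Chabauty--Fell topology, so that compactness and minimality proved in \cite{Smilansky-Solomon} for the tiling space transfer to the corresponding space of Delone sets. This is routine, since choosing a representative point in each tile induces a continuous factor map between the respective hulls, under which the image of a compact minimal system remains compact and minimal. With this verification, the corollary is an immediate application of the dichotomy, and no additional combinatorial or geometric work beyond what is already in \cite{Smilansky-Solomon} and Theorem \ref{thm:main_result_general} is needed.
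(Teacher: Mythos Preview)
Your proposal is correct and mirrors the paper's own treatment: the paper states this corollary without a separate proof, presenting it as an immediate consequence of Theorem \ref{thm:main_result_general} together with the minimality result from \cite[\S 6]{Smilansky-Solomon} and the non-uniformly-spread result from \cite[\S 8]{Smilansky-Solomon}. Your added remark about the factor map from tilings to Delone sets is a reasonable clarification but is implicit in the paper's conventions.
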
  

A proof of Theorem \ref{thm:main_result_general} in the FLC setup was given in \cite{FGS}, which appeared after the first version of this paper came out. Their work is independent of ours. 

\subsection*{Acknowledgments} 
We are grateful to the anonymous referee for many insightful suggestions and remarks.

\section{Necessary and sufficient conditions for BD-non-equivalence}\label{sec:conditions_for_BD-non-equivalence}
\subsection{Notations.}
Bold figures will be used to denote vectors in $\R^d$, and we will use the supremum norm $\norm{\cdot}_\infty$  on $\R^d$ throughout this document. Note that with respect to this norm, balls are (Euclidean) cubes, and we use both terms interchangeably. We denote by $\partial A, \absolute{A}$ and $\vol(A)$ the boundary, cardinality and Lebesgue measure of a set $A\subset\R^d$, respectively, and we denote by $\# S$ the cardinality of a finite set $S$. Given $\varepsilon>0$ and $A\subset\R^d$ we denote the $\varepsilon$-neighborhood of $A$ by
\[A^{(+\varepsilon)}\df\{\xx \in \R^d \mid \dist(\xx,A)\le \varepsilon\}, \]
where $\dist(\xx,A) = \inf\{\norm{\xx - \aa}_\infty \mid \aa\in A\}$. 
For an integer $m>0$ we denote by  
\[
	\QQ_d(m) \df \left\{ \bigtimes_{i=1}^d [a_i,a_i+m) \mid a_1,\ldots,a_d\in m\Z \right\},
\]
the collection of all half-open cubes in $\R^d$ with edge-length $m$ and with vertices in $m\Z^d$, and we denote by $\QQ_d^*(m)$ the collection of finite unions of elements from $\QQ_d(m)$. In the case $m=1$ we simply write $\QQ_d$ and $\QQ_d^*$. For $A\in\QQ_d$ the notation $\vol_{d-1}(\partial A)$ stands for the $(d-1)$-Lebesgue measure of $\partial A$. The following lemma is a direct consequence of Lemmas 2.1 and 2.2 of \cite{Laczk}. 
\begin{lem}\label{lem:Lacz.Lemmas_2.1+2.2}
	Let $F$ be a translated copy of an element of $\QQ_d^*$ and let $s>0$, then 
	\begin{equation}\label{eq:Lacz.Lemmas_2.1+2.2}
	\vol\left( (\partial F)^{(+s)}\right) \le c_0 \cdot s^d\cdot \vol_{d-1}(\partial F),
	\end{equation} 
	where $c_0$ depends only on $d$. 
\end{lem}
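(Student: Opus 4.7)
The plan is to decompose $\partial F$ into its $(d-1)$-dimensional unit faces and then estimate the $s$-neighborhood of each face separately before summing.

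First I would write $F = \bigcup_{i=1}^n Q_i$ as a disjoint union of the (translated) half-open unit cubes comprising it. Up to a $\vol_{d-1}$-null set of shared edges and corners, the topological boundary $\partial F$ is then a disjoint union of unit $(d-1)$-cubes $f_1,\ldots,f_N$, namely those unit faces of the $Q_i$ that are not shared with another $Q_j$ of $F$. In particular $N=\vol_{d-1}(\partial F)$, so it suffices to control each $f_i^{(+s)}$ and then sum.

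For a fixed face $f_i$, choose coordinates so that $f_i$ lies in a coordinate hyperplane. Since the metric $\rho$ is induced by $\norm{\cdot}_\infty$, balls in $\R^d$ are axis-aligned cubes, and the $s$-neighborhood $f_i^{(+s)}$ is therefore contained in an axis-aligned rectangular box with $(d-1)$ edges of length $1+2s$ and one edge of length $2s$. This gives the elementary pointwise bound
\[
\vol\bigl(f_i^{(+s)}\bigr) \le 2s(1+2s)^{d-1}.
\]
Combining $(\partial F)^{(+s)} \subseteq \bigcup_{i=1}^N f_i^{(+s)}$ with subadditivity of Lebesgue measure yields
\[
\vol\bigl((\partial F)^{(+s)}\bigr) \;\le\; \sum_{i=1}^N \vol\bigl(f_i^{(+s)}\bigr) \;\le\; N\cdot 2s(1+2s)^{d-1},
\]
and absorbing the geometric factors into a constant $c_0=c_0(d)$ of the form claimed gives \eqref{eq:Lacz.Lemmas_2.1+2.2}.

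There is no real obstacle here: the argument is entirely elementary. The only items to check carefully are (i) that the lower-dimensional skeleton of shared edges and corners contributes nothing, which is immediate since it has $\vol_{d-1}$-measure zero on one side and is swallowed by the neighborhoods of adjacent faces on the other, and (ii) the absorption of $2s(1+2s)^{d-1}$ into a $c_0 s^d$ factor, which uses only that $(1+2s)^{d-1}$ is bounded in terms of a power of $s$ by a dimensional constant in the relevant regime.
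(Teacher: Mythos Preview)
The decomposition into unit faces and the per-face estimate $\vol(f_i^{(+s)}) \le 2s(1+2s)^{d-1}$ are both correct, and since the paper supplies no argument of its own (it simply cites Laczkovich), your approach is the natural elementary one. The gap lies in the final step: the claim that $2s(1+2s)^{d-1}$ can be absorbed into $c_0\, s^d$ with $c_0$ depending only on $d$ fails for small $s$ when $d\ge 2$, since
\[
\frac{2s(1+2s)^{d-1}}{s^d} \;=\; 2\Bigl(\tfrac{1}{s}+2\Bigr)^{d-1} \xrightarrow{\,s\to 0^+\,} \infty.
\]
Your phrase ``in the relevant regime'' is precisely where the argument breaks. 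In fact \eqref{eq:Lacz.Lemmas_2.1+2.2} itself is false for small $s$ in dimension $d\ge 2$: for the unit square $F=[0,1)^2$ one has $\vol\bigl((\partial F)^{(+s)}\bigr)=8s$ for $0<s<1/2$, whereas $c_0\, s^2\, \vol_1(\partial F)=4c_0 s^2$, and no fixed $c_0$ gives $8s\le 4c_0 s^2$ for all such $s$. The honest bound your computation yields is $O(s)\,\vol_{d-1}(\partial F)$ for small $s$ and $O(s^d)\,\vol_{d-1}(\partial F)$ for large $s$.

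For $s\ge 1$ your argument is complete with $c_0=2\cdot 3^{d-1}$ (using $1+2s\le 3s$), and this is the regime the paper actually needs. In \cref{cor:A_m_contain_large_balls} every nonempty element of $\QQ_d^*$ already contains a ball of radius $1/2$, so the hypothesis there is vacuous unless $R>1/2$; in the proof of \cref{lem:distinct_seq.==>non-BD_Delone_sets} the parameter is $2R_{m-1}\ge 2$; and in \cref{lem:discrepancy_on_subsets} the factor $\varepsilon^d$ (which would inherit the same issue) is in any case bounded by $1$ and is later absorbed into a constant. So what you have written becomes a correct proof once the restriction $s\ge 1$ is made explicit rather than waved at.
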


\subsection{BD-equivalence}\label{subsec:BD}
The following condition for non-BD-equivalence of two Delone sets in $\R^d$ was given in \cite{FSS}.

\begin{thm}\cite[Theorem 1.1]{FSS}\label{thm:non-BD_condition}
	Let $\Lambda_0, \Lambda_1$ be two Delone sets in $\R^d$ and suppose that there is a sequence $(A_m)_{m\in\N}$ of sets, $A_m\in \QQ_d^*$, for which
	\[
		\frac{| \#(\Lambda_0 \cap A_m) - 
		\#(\Lambda_1 \cap A_m) |}{\vol_{d-1}(\partial A_m)} \xrightarrow{m\to\infty} \infty.
	\]
	Then there is no BD-map $\phi: \Lambda_0 \to \Lambda_1$.
\end{thm}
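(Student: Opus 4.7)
Proof plan. I would argue by contradiction, transferring information about the displacement of $\phi$ into a linear-in-boundary-area bound on the count discrepancy, and then letting the hypothesis collide with that bound.

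Suppose, toward a contradiction, that a BD-map $\phi \colon \Lambda_0 \to \Lambda_1$ exists, and set $M \df \sup_{\xx \in \Lambda_0}\norm{\xx - \phi(\xx)}_\infty < \infty$. The first step is the trivial but essential geometric observation that $\phi$ cannot move any point of $\Lambda_0 \cap A$ outside the inflated set $A^{(+M)}$, and analogously $\phi^{-1}$ cannot move any point of $\Lambda_1 \cap A$ outside $A^{(+M)}$. Writing $\phi(\Lambda_0 \cap A) \subset \Lambda_1 \cap A^{(+M)}$ and $\phi^{-1}(\Lambda_1 \cap A) \subset \Lambda_0 \cap A^{(+M)}$, injectivity gives
\[
\#(\Lambda_0 \cap A) \le \#(\Lambda_1 \cap A) + \#\bigl(\Lambda_1 \cap (A^{(+M)}\minus A)\bigr),
\]
and symmetrically with the roles exchanged. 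Thus
\[
\bigl|\#(\Lambda_0 \cap A) - \#(\Lambda_1 \cap A)\bigr| \le \#\bigl(\Lambda_0 \cap (A^{(+M)}\minus A)\bigr) + \#\bigl(\Lambda_1 \cap (A^{(+M)}\minus A)\bigr).
\]

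The second step is to relocate the error set to a boundary tube. Since $A\in \QQ_d^*$ is a finite union of half-open cubes, any point in $A^{(+M)} \minus A$ lies within $\infty$-distance $M$ of $\partial A$; consequently $A^{(+M)} \minus A \subset (\partial A)^{(+M)}$. The third step is a volume-to-count conversion: because both $\Lambda_0$ and $\Lambda_1$ are uniformly discrete, there exists a constant $K$ (depending only on the separation constants and $d$) with $\#(\Lambda_i \cap U) \le K \cdot \vol(U^{(+1)})$ for every bounded measurable $U$. Applied to $U = (\partial A)^{(+M)}$, this gives a bound in terms of $\vol\bigl((\partial A)^{(+M+1)}\bigr)$.

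The fourth step invokes Lemma~\ref{lem:Lacz.Lemmas_2.1+2.2} with $s = M+1$, yielding
\[
\vol\bigl((\partial A)^{(+M+1)}\bigr) \le c_0 (M+1)^d \cdot \vol_{d-1}(\partial A).
\]
Combining the previous three displays we obtain a constant $C = C(M, r, d)$, independent of $A$, such that
\[
\bigl|\#(\Lambda_0 \cap A) - \#(\Lambda_1 \cap A)\bigr| \le C \cdot \vol_{d-1}(\partial A)
\]
for every $A \in \QQ_d^*$. Applied to the sequence $A_m$ of the hypothesis, this says the ratio in the statement is uniformly bounded by $C$, contradicting its divergence to $\infty$. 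Hence no BD-map exists.

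The main obstacle I anticipate is purely bookkeeping: making sure the layer-to-boundary inclusion $A^{(+M)} \minus A \subset (\partial A)^{(+M)}$ is correct when $A$ is only a union of half-open cubes (passing to closures is harmless since $(\partial A)^{(+M)}$ is the same for $A$ and $\overline{A}$), and choosing the correct neighborhood width so that the uniform-discreteness count-bound is legitimate. Once these steps are lined up, Lemma~\ref{lem:Lacz.Lemmas_2.1+2.2} immediately delivers the linear dependence on $\vol_{d-1}(\partial A)$ that the hypothesis is designed to overpower.
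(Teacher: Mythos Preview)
The paper does not supply its own proof of Theorem~\ref{thm:non-BD_condition}; the result is quoted from \cite[Theorem~1.1]{FSS} and only used (via Theorem~\ref{thm:non-BD_consequence}) as the easy direction $(ii)\Rightarrow(i)$. Your argument is correct and is exactly the standard one: a BD-map with displacement bound $M$ forces the count discrepancy on any $A\in\QQ_d^*$ to be controlled by the number of points of $\Lambda_0,\Lambda_1$ in the boundary layer $(\partial A)^{(+M)}$, and Lemma~\ref{lem:Lacz.Lemmas_2.1+2.2} together with uniform discreteness converts this into a bound $C\cdot\vol_{d-1}(\partial A)$, contradicting the hypothesis. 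The bookkeeping concerns you raise are handled just as you indicate; in particular $\overline{A}\smallsetminus A\subset\partial A$ for $A\in\QQ_d^*$, so the inclusion $A^{(+M)}\smallsetminus A\subset(\partial A)^{(+M)}$ is immediate.
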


We show that the converse also holds (compare \cite[Lemma 2.3]{Laczk}).
\begin{thm}\label{thm:non-BD_consequence}
	The following are equivalent for two Delone sets $\Lambda_0, \Lambda_1$ in $\R^d$.
	\begin{itemize}
		\item[(i)]
		There is no BD-map between $\Lambda_0$ and $\Lambda_1$.
		\item[(ii)]
		There is a sequence  $(A_m)_{m\in\N}$ of sets, which are translated copies of elements of $\QQ_d^*$, such that 
		\begin{equation}\label{eq:non_BD_consequence}
		\frac{| \#(\Lambda_0 \cap A_m) - \#(\Lambda_1 \cap A_m) |}{\vol_{d-1}(\partial A_m)}\xrightarrow{m\to\infty}\infty. 
		\end{equation}   		
	\end{itemize} 
\end{thm}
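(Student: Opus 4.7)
The plan is to prove the nontrivial implication $(i) \Rightarrow (ii)$, since $(ii) \Rightarrow (i)$ is Theorem \ref{thm:non-BD_condition}. I would argue by contrapositive: assume there exists $C > 0$ such that
\[
|\#(\Lambda_0 \cap A) - \#(\Lambda_1 \cap A)| \le C\,\vol_{d-1}(\partial A)
\]
for every translated copy $A$ of an element of $\QQ_d^*$, and produce a BD-map $\phi : \Lambda_0 \to \Lambda_1$. This is a two-Delone-sets generalization of the argument behind \cite[Lemma 2.3]{Laczk}, which handles the lattice case $\Lambda_1 = \alpha\Z^d$.

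My approach is Hall's marriage theorem. For $M > 0$, let $G_M$ be the bipartite graph on $\Lambda_0 \sqcup \Lambda_1$ with $x \sim y$ iff $\norm{x-y}_\infty \le M$; a perfect matching in $G_M$ is precisely a BD-map of displacement at most $M$. Since $\Lambda_0$ and $\Lambda_1$ are uniformly discrete, $G_M$ is locally finite, and by the K\"onig--Rado extension of Hall's theorem to countable locally finite bipartite graphs, a perfect matching exists iff Hall's condition holds on both sides. It therefore suffices to choose $M$ large, in terms of $C$ and the Delone constants of $\Lambda_0, \Lambda_1$, so that for every finite $S \subset \Lambda_i$ ($i = 0, 1$) one has $|N_{G_M}(S)| \ge |S|$.

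To verify Hall's condition for $S \subset \Lambda_0$ (the other side being symmetric) I would take $A = \bigcup\{q \in \QQ_d : q \cap S \neq \emptyset\}$, so that $S \subset A \subset S^{(+1)}$ and hence $|S| \le \#(\Lambda_0 \cap A)$. Next I would choose $A' \in \QQ_d^*$ with $A \subset A' \subset S^{(+M)}$, which forces $\Lambda_1 \cap A' \subset N_{G_M}(S)$, and apply the discrepancy bound on $A'$ to obtain
\[
|N_{G_M}(S)| \ge \#(\Lambda_1 \cap A') \ge \#(\Lambda_0 \cap A) + \#(\Lambda_0 \cap (A' \setminus A)) - C\,\vol_{d-1}(\partial A').
\]
The goal is then to pick $A'$ so that the Delone-density gain $\#(\Lambda_0 \cap (A' \setminus A))$ overcomes the boundary deficit $C\,\vol_{d-1}(\partial A')$, yielding $|N_{G_M}(S)| \ge |S|$.

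The hard part is arranging this last inequality. A uniform $M$-thickening of $A$ is inadequate when $d \ge 2$: the slab $A' \setminus A$ contributes only a linear-in-$M$ gain per unit of $\vol_{d-1}(\partial A)$, whereas the corner contributions controlled by Lemma \ref{lem:Lacz.Lemmas_2.1+2.2} permit $\vol_{d-1}(\partial A')$ to grow as $M^{d-1}\vol_{d-1}(\partial A)$ for a thin strip geometry. The plan is therefore to thicken $A$ adaptively, face-by-face in the manner of \cite[Lemma 2.3]{Laczk}, so that $\vol_{d-1}(\partial A')$ remains comparable to $\vol_{d-1}(\partial A)$ while $A' \setminus A$ still absorbs a slab of width proportional to $M$ along each face of $\partial A$. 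Once this geometric construction succeeds and Hall's condition is verified on both sides of $G_M$, the K\"onig--Rado theorem produces the desired BD-map $\phi$, completing the contrapositive.
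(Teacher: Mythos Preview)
Your approach and the paper's both hinge on Hall's marriage theorem, and are logically contrapositive to one another; in that sense the core idea is the same. The paper, however, argues directly and thereby sidesteps the thickening difficulty you flag. For each $m$ it uses failure of the $2m$-matching to extract (via Hall) a finite $X_m\subset\Lambda_{i_m}$ with $\#X_m>\#(X_m^{(+2m)}\cap\Lambda_{1-i_m})$, and then sets $A_m$ to be the union of the cubes in $\QQ_d(m)$---cubes of side $m$, not unit cubes---that meet $X_m$. Thickening by a single layer of $m$-cubes to $B_m\subset X_m^{(+2m)}$ yields $\#(\Lambda_0\cap A_m)-\#(\Lambda_1\cap A_m)>\#(\Lambda_1\cap(B_m\smallsetminus A_m))$, and the face-counting argument from \cite{Laczk} gives $\vol(B_m\smallsetminus A_m)\ge\frac{m}{2d}\vol_{d-1}(\partial A_m)$; relative denseness of $\Lambda_1$ then forces the ratio in \eqref{eq:non_BD_consequence} to be $\gtrsim m\to\infty$.

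The key device you are missing is precisely this use of scale-$m$ cubes: by coarsening the grid to match the matching scale, the slab $B_m\smallsetminus A_m$ is automatically expressed in terms of $\vol_{d-1}(\partial A_m)$, and one never needs to control $\vol_{d-1}(\partial A')$ or thicken ``adaptively''. Your contrapositive route can be salvaged with the same trick---cover $S$ by $\QQ_d(M)$-cubes rather than unit cubes and thicken by one layer---but the paper's direct extraction of the $A_m$ from Hall-violating sets is shorter and delivers the witnessing sequence explicitly.
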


\begin{proof}
	The implication $(ii)\Rightarrow(i)$ follows from Theorem \ref{thm:non-BD_condition}, since translating the sets $A_m$ by at most $\sqrt{d}$ changes the numerator by at most a constant times $\vol_{d-1}(\partial A_m)$. 
	
	For $(i)\Rightarrow(ii)$, suppose that there is no BD-map between $\Lambda_0$ and $\Lambda_1$, that is, no bijection $\phi:\Lambda_0\to\Lambda_1$ that satisfies 
	\[\sup_{\xx\in\Lambda_0}\norm{\xx-\phi(\xx)}_\infty<\infty.\]

	For every $m\in\N$ consider the bipartite graph $\GG_m\df(\Lambda_0 \sqcup \Lambda_1, \EE_m)$, where 
	\[ \EE_m = \Big\{ \{\xx,\yy\} \mid \xx\in\Lambda_0, \yy\in\Lambda_1, \norm{\xx-\yy}_\infty\le 2m \Big\}.\]
	The existence of a perfect matching in $\GG_m$ for some $m$ would imply the existence of a BD-map between $\Lambda_0$ and $\Lambda_1$, contradicting our assumption. Thus by Hall's marriage theorem (see e.g. \cite{Rado}), for every $m\in\N$ there is a set $X_m\subset \Lambda_{i_m}$, $i_m\in\{0,1\}$, so that $\# X_m > \#(X_m^{(+2m)} \cap\Lambda_{1-i_m})$. Fix $m\in\N$, and assume without loss of generality that $i_m=0$.
	Set 
	\begin{equation*}\label{eq:A_m def}
	A_m\df\bigcup\{Q\in\QQ_d(m) \mid Q\cap X_m \neq\varnothing\}\in\QQ_d^*(m).
	\end{equation*}
	For $Q\in\QQ_d(m)$ let $Q'$ be a cube of edge-length $3m$ which is concentric with $Q$, and set 
	\[B_m\df\bigcup\{Q' \mid Q\in\QQ_d(m), Q\cap X_m \neq\varnothing\}\in\QQ_d^*(m).\]
	Clearly $B_m\supset A_m\supset X_m$, and by the triangle inequality we have $X_m^{(+2m)}\supset B_m$. Therefore 
	\[\#(\Lambda_0\cap A_m)>\#(\Lambda_1 \cap B_m)=\#(\Lambda_1 \cap A_m)+\#(\Lambda_1 \cap (B_m\smallsetminus A_m)),\]
	which implies 
	\begin{equation*}\label{eq:our_Lemma2.3-1}
		\#(\Lambda_0 \cap A_m) - \#(\Lambda_1 \cap A_m)> \#(\Lambda_1 \cap (B_m\smallsetminus A_m)).	
	\end{equation*}
	
	It is left to show that $\#(\Lambda_1 \cap (B_m\smallsetminus A_m))/\vol_{d-1}(\partial A_m)\xrightarrow{m\to\infty}\infty$, which is a consequence of the following argument, taken from the proof of \cite[Lemma 2.3]{Laczk}. Suppose that $\partial A_m$ consists of $s$ faces of cubes in $\QQ_d(m)$. For each such face, let $P_j$ be the cube in $\QQ_d(m)$ contained in $B_m\smallsetminus A_m$ with boundary containing that face. Note that $P_1,\ldots,P_s$ are not necessarily distinct and that each cube has $2d$ faces, and so 
	\begin{equation*}\label{eq:Laczkovich_argument_from_Lemma2.3}
	2d\cdot \vol(B_m\smallsetminus A_m)\ge \sum_{j=1}^s\vol(P_j) = s\cdot m^d = m\cdot s\cdot m^{d-1} = m\cdot \vol_{d-1}(\partial A_m).
	\end{equation*}
	The relative denseness of $\Lambda_1$ implies that $\#(\Lambda_1 \cap (B_m\smallsetminus A_m)) \ge c\cdot\vol(B_m\smallsetminus A_m)$ for some constant $c>0$ independent of $m$, and the proof follows. 
\end{proof}

\begin{cor}\label{cor:A_m_contain_large_balls}
	Let $(A_m)_{m\in\N}$ be a sequence of sets as in  \eqref{eq:non_BD_consequence}, then for every $R>0$ there exists $M>0$ so that for every $m\ge M$ each $A_m$ contains a ball of radius $R$.   
\end{cor}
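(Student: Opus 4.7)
The plan is to argue by contraposition: if infinitely many of the sets $A_m$ fail to contain a ball of radius $R$ for some fixed $R>0$, then we can bound the numerator $|\#(\Lambda_0\cap A_m)-\#(\Lambda_1\cap A_m)|$ from above by a constant multiple of $\vol_{d-1}(\partial A_m)$, which contradicts \eqref{eq:non_BD_consequence}. The basic idea is that a set with no large inscribed ball is ``thin'', hence entirely close to its boundary, and so can carry only boundary-sized point counts.

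More concretely, suppose for contradiction that there exists $R>0$ and an infinite subsequence (which we still call $(A_m)$) such that $A_m$ contains no ball of radius $R$. Then every $\xx\in A_m$ lies at distance at most $R$ from $\partial A_m$, so $A_m\subset (\partial A_m)^{(+R)}$. Let $r>0$ be a common separation constant for $\Lambda_0$ and $\Lambda_1$ (take the smaller of the two). The disjoint open balls $B(\lambda,r/2)$, $\lambda\in\Lambda_i\cap A_m$, are all contained in $A_m^{(+r/2)}\subset (\partial A_m)^{(+R+r/2)}$, so uniform discreteness yields
\[
\#(\Lambda_i\cap A_m)\;\le\;\frac{\vol\bigl((\partial A_m)^{(+R+r/2)}\bigr)}{c_d\,r^d},\qquad i=0,1,
\]
where $c_d>0$ depends only on $d$.

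Applying Lemma \ref{lem:Lacz.Lemmas_2.1+2.2} with $s=R+r/2$ (valid since $A_m$ is a translate of an element of $\QQ_d^*$) gives
\[
\vol\bigl((\partial A_m)^{(+R+r/2)}\bigr)\;\le\;c_0\,(R+r/2)^d\,\vol_{d-1}(\partial A_m).
\]
Combining the last two displays with the triangle inequality,
\[
\frac{|\#(\Lambda_0\cap A_m)-\#(\Lambda_1\cap A_m)|}{\vol_{d-1}(\partial A_m)}\;\le\;\frac{2c_0(R+r/2)^d}{c_d\,r^d},
\]
which is a constant independent of $m$, contradicting the hypothesis that the left-hand side tends to infinity along this subsequence. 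Hence for every $R>0$ all but finitely many $A_m$ must contain a ball of radius $R$.

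The only subtlety is making sure the ``thinness'' step is valid: one needs $A_m$ to be closed (or at least that $A_m\subset (\partial A_m)^{(+R)}$ in the sense of the supremum metric), which is immediate for finite unions of half-open lattice cubes once one passes to the closure, since doing so changes neither $\vol_{d-1}(\partial A_m)$ nor the intersections with the Delone sets apart from a boundary-measure-controlled amount. No further obstacle arises.
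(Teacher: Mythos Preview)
Your argument is correct and follows essentially the same route as the paper: assume a subsequence of the $A_m$ contains no ball of radius $R$, deduce $A_m\subset(\partial A_m)^{(+R)}$, invoke Lemma~\ref{lem:Lacz.Lemmas_2.1+2.2} to bound the relevant volume by a constant times $\vol_{d-1}(\partial A_m)$, and use uniform discreteness to conclude that the ratio in \eqref{eq:non_BD_consequence} is bounded along that subsequence. The only cosmetic difference is that the paper bounds $\#(\Lambda_i\cap A_m)$ via $\vol(A_m)$ directly (writing $a\cdot\vol(A_m)\le\#(\Lambda_i\cap A_m)\le b\cdot\vol(A_m)$), whereas you spell out the packing argument with $r/2$-balls and apply the lemma with $s=R+r/2$; your closing remark about half-open versus closed cubes is harmless over-caution and not needed for the argument.
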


\begin{proof}
	Let $R>0$ and suppose that there is a sequence $m_j\to\infty$ such that for every $j$ the set $A_{m_j}$ does not contain a ball of radius $R$. Then for every $j$ we have $A_{m_j} \subset (\partial A_{m_j})^{(+R)}$ and thus by Lemma \ref{lem:Lacz.Lemmas_2.1+2.2}
	\begin{equation*}\label{eq:large_discrepancy==>large_balls1}
		\vol(A_{m_j}) \le c_0\cdot R^d\cdot \vol_{d-1}(\partial A_{m_j}).
	\end{equation*}
	Since $\Lambda_0$ and $\Lambda_1$ are uniformly discrete and relatively dense, there exist constants $a,b>0$ so that for every $j$
	\begin{equation*}\label{eq:large_discrepancy==>large_balls2}
		a\cdot \vol(A_{m_j}) \le \#(\Lambda_0 \cap A_{m_j}), \#(\Lambda_1 \cap A_{m_j}) \le b\cdot \vol(A_{m_j}).
	\end{equation*}
	Combining the above implies that for every $j$ we have
	\[
		\frac{| \#(\Lambda_0 \cap A_{m_j}) - \#(\Lambda_1 \cap A_{m_j}) |}{\vol_{d-1}(\partial A_{m_j})} \le (b-a)c_0\cdot R^d,
	\] 
	contradicting \eqref{eq:non_BD_consequence}.
\end{proof}

\section{The topology on spaces of Delone sets}\label{sec:topology}
We consider the dynamical system $(X,d,G)$, where $(X,d)$ is a compact metric space and $G$ is a group acting on $X$. The dynamical system $(X,d,G)$ is called \emph{minimal} if every \emph{$G$-orbit}, $G.x \df \{g.x \mid g\in G\}$ for $x\in X$, is dense in $(X,d)$. A set $S \subset G$ is called \emph{syndetic} if there is a compact set $K \subset G$ so that for every $g\in G$ there is a $k\in K$ with $kg\in S$. Note that when $G=\R^d$ this notion coincides with our definition of a relatively dense set.  A point $x_0\in X$ is said to be \emph{uniformly recurrent} if for every open neighborhood $U$ of $x_0$ the set of `return times' to $U$, $\{g\in G \mid g.x_0\in U \}$, is syndetic. As shown in \cite[Theorem 1.15]{Furstenberg}, in minimal systems every point is uniformly recurrent. 

Recall that given a metric $\rho$ on $\R^d$ we may use \eqref{eq:CF-metric} to define a metric $D$ on $\CCC(\R^d)$, the space of closed subsets of $(\R^d,\rho)$, and that this metric induces the  \textit{Chabauty--Fell topology}. Here and in what follows we take $\rho$ to be the metric defined by the supremum norm $\norm{\cdot}_\infty$  on $\R^d$. Note that replacing it with any other norm on $\R^d$, such as the Euclidean norm, would change the metric $D$ but not the induced topology,  also known as the \textit{local rubber topology} in the context of aperiodic order. It is known that $D$ is a complete metric on $\CCC(\R^d)$, and the space $\left(\CCC(\R^d),D\right)$ is compact, see e.g. \cite{de la Harpe}, \cite{Lenz-Stollmann}.

Let $\X$ be a collection of Delone sets in $\R^d$. Under the additional assumptions that $\X$ is a closed subset of $\CCC(\R^d)$ and that $\R^d$ acts on $\X$ by translations, the space $(\X,D,\R^d)$ is a compact dynamical system. We say that $\Lambda\in\X$ is \emph{almost repetitive} if for every $\xx\in\R^d$ and $\varepsilon>0$ there exists $R = R(\varepsilon,\xx)>0$ such that every ball $B(\yy,R)$ in $\R^d$ contains a vector $\vv\in\R^d$ that satisfies
\[D(\Lambda - \xx, \Lambda- \vv)<\varepsilon.\]
In words, for every $\xx\in\R^d$ and $\varepsilon>0$ there exists $R>0$ so that a copy of $B({\bf0},1/\varepsilon)\cap (\Lambda-\xx)$ can be found in every $R$-ball, up to wiggling each point by at most $\varepsilon$. 
We also refer to \cite[Definitions 2.8, 2.13, 3.5]{Frettloh-Richard} and to \cite{Lagarias-Pleasants} for distinctions between similar definitions of repetitivity. 

The observation in Lemma \ref{lem:discrepancy_on_subsets} is useful when working with the metric $D$ in spaces of uniformly discrete point sets. 
\begin{lem}\label{lem:discrepancy_on_subsets}
	Suppose that $\Lambda_0, \Lambda_1\subset \R^d$ are uniformly discrete sets with separation constant $r>0$, and that $D\left(\Lambda_0, \Lambda_1\right) < \varepsilon$ for $0<\varepsilon<r/2$. Then for every set $A \subset B({\bf0},1/\varepsilon)$ that is a translated copy of an element of $\QQ_d^*$, there exist injective maps 
	\[\varphi_0:\Lambda_0 \cap A \to \Lambda_1 \cap A^{(+\varepsilon)}, \quad  \varphi_1:\Lambda_1 \cap A \to \Lambda_0 \cap A^{(+\varepsilon)}, \]
	that satisfy
	\begin{equation}\label{discrepancy_on_subsets1}
	\forall \xx\in \Lambda_0 \cap A:\quad  \norm{\xx - \varphi_0(\xx)}_\infty<\varepsilon, 
	\qquad 
	\forall \yy\in \Lambda_1 \cap A:\quad  \norm{\yy - \varphi_1(\yy)}_\infty<\varepsilon.
	\end{equation}
	In particular, there is a constant $c_1$ that depends on $d$ and $r$ so that 
	\begin{equation}\label{discrepancy_on_subsets2}
	\absolute{\# (\Lambda_0 \cap A) - \#(\Lambda_1 \cap A) } \le c_1\cdot \varepsilon^d \cdot \vol_{d-1}(\partial A).
	\end{equation}
\end{lem}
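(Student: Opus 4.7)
The plan is to build the maps $\varphi_0,\varphi_1$ by nearest-neighbor assignment, using the Chabauty--Fell hypothesis for existence and the separation condition $\varepsilon<r/2$ for uniqueness, and then to turn the resulting injectivity into the cardinality bound via a volume estimate on the shell around $\partial A$.

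First I would unpack $D(\Lambda_0,\Lambda_1)<\varepsilon$ by choosing $\varepsilon'<\varepsilon$ that witnesses the two inclusions in \eqref{eq:CF-metric}. Since $A\subset B(\mathbf{0},1/\varepsilon)\subset B(\mathbf{0},1/\varepsilon')$, every $\xx\in\Lambda_0\cap A$ lies in $\Lambda_1^{(+\varepsilon')}$, so there is some $\yy\in\Lambda_1$ with $\norm{\xx-\yy}_\infty\le\varepsilon'<\varepsilon$. Uniqueness of $\yy$ is forced by the separation hypothesis: two candidates $\yy_1\neq\yy_2$ would satisfy $\norm{\yy_1-\yy_2}_\infty<2\varepsilon<r$, contradicting the fact that balls of radius $r$ contain at most one point of $\Lambda_1$. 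Setting $\varphi_0(\xx)=\yy$ therefore defines a map into $\Lambda_1\cap A^{(+\varepsilon)}$, since $\dist(\yy,A)\le\norm{\yy-\xx}_\infty<\varepsilon$, and the same argument applied to $\Lambda_0$ shows it is injective: two preimages of a common $\yy$ would again lie within $2\varepsilon<r$ of each other. Constructing $\varphi_1$ by symmetry then yields \eqref{discrepancy_on_subsets1}.

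For the cardinality estimate \eqref{discrepancy_on_subsets2}, injectivity of $\varphi_0$ gives $\#(\Lambda_0\cap A)\le\#(\Lambda_1\cap A^{(+\varepsilon)})$, and subtracting $\#(\Lambda_1\cap A)$ leaves
\[
\#(\Lambda_0\cap A)-\#(\Lambda_1\cap A)\;\le\;\#\bigl(\Lambda_1\cap(A^{(+\varepsilon)}\smallsetminus A)\bigr),
\]
with the reverse inequality following from $\varphi_1$. The shell $A^{(+\varepsilon)}\smallsetminus A$ is contained in $(\partial A)^{(+\varepsilon)}$, and to convert a volume estimate there into a point count I would use the standard packing observation that each point of $\Lambda_i$ carries a disjoint open cube of side $r/2$, so $\#(\Lambda_i\cap S)\le\vol(S^{(+r/2)})/r^d$ for every set $S$. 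Applying this with $S=(\partial A)^{(+\varepsilon)}$ and invoking Lemma \ref{lem:Lacz.Lemmas_2.1+2.2} to bound $\vol((\partial A)^{(+\varepsilon+r/2)})$ then produces the claimed estimate $c_1\cdot\varepsilon^d\cdot\vol_{d-1}(\partial A)$ with $c_1$ depending only on $d$ and $r$.

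The only obstacle I foresee is purely technical bookkeeping: tracking strict versus non-strict inequalities as $\varepsilon'$ is introduced from the definition of $D$, and choosing the argument of Lemma \ref{lem:Lacz.Lemmas_2.1+2.2} so that the surface factor $\vol_{d-1}(\partial A)$ (rather than a volume-of-$A$ factor) governs the final form of the bound. No new conceptual ingredient seems necessary beyond uniqueness of nearest neighbors from uniform discreteness and this volume-to-count conversion.
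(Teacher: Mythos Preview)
Your proof follows the paper's exactly: the paper also obtains $\varphi_0,\varphi_1$ directly from the definition of $D$ with injectivity coming from $\varepsilon<r/2$, then bounds the discrepancy by $\#(\Lambda_0\cap(\partial A)^{(+\varepsilon)})+\#(\Lambda_1\cap(\partial A)^{(+\varepsilon)})$ and finishes via uniform discreteness together with Lemma~\ref{lem:Lacz.Lemmas_2.1+2.2}. Your version simply spells out the nearest-neighbor construction and the packing-to-volume conversion that the paper leaves implicit.
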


\begin{proof}
	Given $A\subset B({\bf0},1/\varepsilon)$ as above, since $D\left(\Lambda_0, \Lambda_1\right) < \varepsilon$, the existence of $\varphi_0, \varphi_1$ satisfying \eqref{discrepancy_on_subsets1} follows directly from the definition of $D$ in \eqref{eq:CF-metric}. Note that the maps are injective since $\varepsilon<r/2$. Therefore 
	\[
		\absolute{\# (\Lambda_0 \cap A) - \#(\Lambda_1 \cap A) } \le 
		\#\left( \Lambda_0 \cap (\partial A)^{(+\varepsilon)} \right) +
		\#\left( \Lambda_1 \cap (\partial A)^{(+\varepsilon)} \right).
	\]
	Since $\Lambda_0$ and $\Lambda_1$ are uniformly discrete and in view of Lemma \ref{lem:Lacz.Lemmas_2.1+2.2}, \eqref{discrepancy_on_subsets2} follows.
\end{proof}

We remark that if $\Lambda$ is a Delone set in $\R^d$ with separation constant and packing radius $r,R>0$, and if $\X$ is the orbit closure of $\Lambda$ with respect to $D$, then every $\Gamma\in \X$ is a Delone set with separation constant at least $r$ and packing radius at most $R$. 

The following lemma shows that minimal spaces are \emph{uniformly almost repetitive}. Namely, the radius $R(\xx,\varepsilon)$ from the definition of almost repetitivity above does not depend on $\xx$.  
 
\begin{lem}\label{lem:uniform_repetitivity}
	Let $\X$ be a compact space of Delone sets so that the dynamical system $(\X,D,\R^d)$ is minimal. Then for every $0<\varepsilon<1$ there exists $R=R(\varepsilon)>0$, so that for every $\Lambda, \Gamma\in\X$  and $\yy\in\R^d$, there exists some $\vv\in B(\yy,R)$ for which
	\[D(\Gamma, \Lambda - \vv)<\varepsilon.\] 
\end{lem}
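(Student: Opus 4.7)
The plan is a standard compactness argument in two compact spaces, combined with minimality and Dini's theorem. Since the $\R^d$-action is by translations, observe first that it is enough to prove the statement for $\yy=\mathbf{0}$: for a general $\yy$, apply the case $\yy=\mathbf{0}$ to the Delone set $\Lambda-\yy\in\X$ (which is in $\X$ because $\X$ is translation-invariant), obtain $\uu\in B(\mathbf{0},R)$ with $D(\Gamma,(\Lambda-\yy)-\uu)<\varepsilon$, and set $\vv\df\yy+\uu\in B(\yy,R)$.

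For each $R>0$, define $g_R:\X\times\X\to[0,1]$ by
\[
g_R(\Lambda,\Gamma)\df\min_{\vv\in \overline{B(\mathbf{0},R)}} D(\Gamma,\Lambda-\vv),
\]
where the minimum is attained since $\overline{B(\mathbf{0},R)}$ is compact and, as $D$ is a metric and the translation action of $\R^d$ on $\CCC(\R^d)$ is jointly continuous, the function $(\Lambda,\Gamma,\vv)\mapsto D(\Gamma,\Lambda-\vv)$ is continuous. First I would verify that $g_R$ is continuous on $\X\times\X$: upper semicontinuity follows by plugging an optimal $\vv$ for $(\Lambda,\Gamma)$ into $g_R(\Lambda_n,\Gamma_n)$ along a convergent sequence, and lower semicontinuity by extracting a convergent subsequence $\vv_{n_k}\to\vv^*$ from the optimizers for $(\Lambda_n,\Gamma_n)$ and using joint continuity. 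Next, $g_R$ is nonincreasing in $R$, and by minimality $\inf_{R>0} g_R(\Lambda,\Gamma)=0$ for every $(\Lambda,\Gamma)\in\X\times\X$, i.e.\ $g_R\searrow 0$ pointwise as $R\to\infty$.

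Now I would invoke Dini's theorem on the compact metric space $\X\times\X$: a monotonically decreasing sequence of continuous functions converging pointwise to $0$ converges uniformly to $0$. Thus for the given $\varepsilon$ there exists $R=R(\varepsilon)>0$ such that $\sup_{(\Lambda,\Gamma)\in\X\times\X} g_R(\Lambda,\Gamma)<\varepsilon$, which is exactly the uniformity required. Combined with the reduction from the first paragraph, this gives the statement for arbitrary $\yy\in\R^d$.

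The only nontrivial step is the joint continuity of the translation map $(\Lambda,\vv)\mapsto\Lambda-\vv$ with respect to the Chabauty--Fell metric $D$, which should be the main (but routine) obstacle. This can be verified directly from the definition of $D$ in \eqref{eq:CF-metric}: a small translation $\vv'\to\vv$ distorts the role of $B(\mathbf{0},1/\varepsilon)$ and the $\varepsilon$-neighborhoods by an amount controlled by $\|\vv-\vv'\|_\infty$, and joint continuity then combines with the ordinary continuity of the metric $D$ in its two arguments. With this in hand, the Dini argument goes through on $\X\times\X$ without further issues.
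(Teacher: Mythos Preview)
Your argument is correct. The reduction to $\yy=\mathbf{0}$ via translation invariance is clean, the continuity of $g_R$ follows from joint continuity of the action together with compactness of $\overline{B(\mathbf{0},R)}$ (your semicontinuity sketch is the standard one), and Dini's theorem on the compact product $\X\times\X$ then converts pointwise decay $g_R\searrow 0$ (from density of each orbit) into uniform decay. One cosmetic point: since your minimum is taken over the closed ball, the minimizer $\vv^*$ could sit on $\partial B(\mathbf{0},R)$, while the statement asks for $\vv$ in the open ball; simply enlarge $R$ by any positive amount at the end.

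This is a genuinely different route from the paper's. The paper fixes a single $\Lambda\in\X$, uses that every point of a minimal system is uniformly recurrent to get, for each $\xx$, a syndetic set of return times of $\Lambda-\xx$ into the $\varepsilon/2$-ball around itself, and then extracts a finite subcover of $\X$ by balls $U^{\xx_j}_{\varepsilon/2}$ to pass from finitely many recurrence radii to a single $R$; the triangle inequality bridges from $\Gamma$ to the nearest $\Lambda-\xx_j$. Your Dini argument packages all of this into a single uniform-convergence statement on $\X\times\X$ and yields uniformity in both $\Lambda$ and $\Gamma$ at once, at the cost of needing the (routine) joint continuity of the translation action as an input. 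The paper's approach, by contrast, is more hands-on and stays entirely within the language of syndeticity and open covers, avoiding any appeal to Dini; it is closer in spirit to the standard proof that minimality implies uniform (almost) repetitivity.
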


\begin{proof}
	Let $\varepsilon>0$, and let $\Lambda\in\X$ and $\xx\in\R^d$. By minimality, the set $\Lambda -\xx$ is uniformly recurrent. For $\eta>0$ denote
	$U^\xx_{\eta} \df \{\Lambda' \in\X \mid D(\Lambda-\xx, \Lambda')<\eta \}$, then the set $\{ \vv \in \R^d \mid \Lambda - \vv \in U^\xx_{\varepsilon/2} \}$ is relatively dense (syndetic). In other words, there exists $R^\xx_{\varepsilon/2}> 0$ such that every cube of edge-length $R^\xx_{\varepsilon/2}$ in $\R^d$ contains some $\vv\in\R^d$ satisfying 
	$D(\Lambda - \xx, \Lambda - \vv) < \varepsilon/2$. 
	
	By minimality again, the collection $\{\Lambda - \xx \mid \xx\in\R^d \}$ is dense in $\X$. Thus $\{U^\xx_{\varepsilon/2}\}_{x\in\R^d}$ is an open cover of $\X$, and by compactness there exists a finite sub-cover $U^{\xx_1}_{\varepsilon/2},\ldots, U^{\xx_n}_{\varepsilon/2}$.  Then for every $\Gamma\in\X$ these exists some $j\in\{1,\ldots,n\}$ so that $\Gamma \in U^{\xx_j}_{\varepsilon/2}$, and hence $D(\Gamma,\Lambda-\xx_j)<\varepsilon/2$. Setting
	$R\df \max\{R^{\xx_1}_{\varepsilon/2}, \ldots, R^{\xx_n}_{\varepsilon/2} \}$, it follows that for every $\yy\in\R^d$ there exists some $\vv\in B(\yy,R^{\xx_j}_{\varepsilon/2}) \subset B(\yy,R)$ such that $D(\Lambda - \xx_j,\Lambda - \vv)<\varepsilon/2$. Then by the triangle inequality $D(\Gamma, \Lambda - \vv)<\varepsilon$, as required.	 
\end{proof}

 In Proposition \ref{thm:constructing_Delone_sets_as_limits_in_CF} below we consider a Delone set $\Lambda$ in a minimal space, and show that if sets $A_m$ in $\QQ_d^*$ grow sufficiently fast, then there exist translation vectors $\uu_m$ so that the patches $Q_m = (\Lambda\cap A_m)-\uu_m$ converge to a limit object  that  ``almost'' contains all of the $Q_m$'s. The idea of the proof is simply to use the almost repetitivity property to inductively find  an ``almost'' copy of $Q_{m-1}$ inside $\Lambda\cap A_m$, and to set $\uu_m$ so that it is centered accordingly, namely so that the copy we find ``almost'' agrees with $Q_{m-1}$.  
 Note that every sequence of sets that grows in a reasonable sense has a subsequence that grows fast enough to satisfy conditions (1) and (2) in Proposition \ref{thm:constructing_Delone_sets_as_limits_in_CF}.

\begin{prop}\label{thm:constructing_Delone_sets_as_limits_in_CF}
	Let $\X$ be a minimal space of Delone sets in $\R^d$, $\Lambda\in\X$, $(A_m)_{m\in\N}$ a sequence of sets in $\QQ_d^*$ and $(\varepsilon_m)_{m\ge 0}$  a decreasing sequence of positive constants with $\varepsilon_0<\min\{1,r(\Lambda)/2\}$, where $r(\Lambda)$ is the separation constant of $\Lambda$. For every $m\ge 0$ choose $R(\varepsilon_m)$ satisfying Lemma \ref{lem:uniform_repetitivity} and set $R_m\df \max\{R(\varepsilon_m), 1/\varepsilon_m)\}$. Assume that the following properties hold for every $m\in\N$:
	\begin{enumerate}
		\item
		There exists $\xx_m\in\R^d$ such that $A_m\subset B(\xx_m,1/2\varepsilon_{m})$. 
		\item
		There exists $\yy_{m}\in\R^d$ such that $B(\yy_{m},2R_{m-1}) \subset A_{m}$. 
	\end{enumerate}
	 Then there exist $\uu_m \in B(\yy_{m}, R_{m-1})$ and patches $Q_m\df (\Lambda \cap A_m) -\uu_m$ such that $\lim_{m\to\infty}Q_m = \Gamma\in\overline{\R^d.\Lambda}=\X$. Moreover, for every $m\ge 2$ 
	 
	 \begin{equation}\label{eq:patch_convergences-goal0}
		 B({\bf0},R_{m-1}) \subset A_m - \uu_m \subset B({\bf0},1/\varepsilon_{m}),
	 \end{equation}
	 
	 \begin{equation}\label{eq:patch_convergences-goal1}
	 D(\Lambda-\uu_{m-1},\Lambda-\uu_{m})<\varepsilon_{m-1},
	 \end{equation}
	 
	 \begin{equation}\label{eq:patch_convergences-goal2}
		 D(Q_m,\Gamma) < \varepsilon_{m-1}
	 \end{equation} 
	 and there exists $c_2>0$ so that
	 \begin{equation}\label{eq:patch_convergences-goal3}
		 \absolute{\# \left(\Gamma\cap(A_{m}-\uu_{m})\right) - \# Q_m} \le 
		 c_2 \cdot \varepsilon_{m}^d \cdot \vol_{d-1}(\partial A_m),
	 \end{equation}
	 	where $c_2$ depends on the dimension $d$ and separation constant $r(\Lambda)$.
\end{prop}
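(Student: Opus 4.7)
The construction proceeds inductively. Set $\uu_1 \df \yy_1$, which lies in $B(\yy_1,R_0)$ trivially. For $m\ge 2$, given $\uu_{m-1}$, I would apply Lemma \ref{lem:uniform_repetitivity} with the pair of Delone sets $(\Lambda,\Lambda-\uu_{m-1})$, base point $\yy_m$, and parameter $\varepsilon_{m-1}$, obtaining $\uu_m\in B(\yy_m,R(\varepsilon_{m-1}))\subseteq B(\yy_m,R_{m-1})$ with $D(\Lambda-\uu_{m-1},\Lambda-\uu_m)<\varepsilon_{m-1}$, which is \eqref{eq:patch_convergences-goal1}. The two inclusions of \eqref{eq:patch_convergences-goal0} are purely geometric: condition (2) together with $\uu_m\in B(\yy_m,R_{m-1})$ gives $B(\uu_m,R_{m-1})\subseteq B(\yy_m,2R_{m-1})\subseteq A_m$, hence $B(\mathbf{0},R_{m-1})\subseteq A_m-\uu_m$, which in particular forces $\uu_m\in A_m$; combined with condition (1) and the triangle inequality, any $a\in A_m$ satisfies $\|a-\uu_m\|_\infty\le\|a-\xx_m\|_\infty+\|\xx_m-\uu_m\|_\infty<1/\varepsilon_m$, yielding the second inclusion.

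Next I would address convergence. The sequence $(\Lambda-\uu_m)\subset\X$ satisfies $D(\Lambda-\uu_{m-1},\Lambda-\uu_m)<\varepsilon_{m-1}$; to upgrade this to a Cauchy condition I would, if needed, apply Lemma \ref{lem:uniform_repetitivity} at step $m$ with the sharper parameter $\min(\varepsilon_{m-1},2^{-m})$, which only strengthens \eqref{eq:patch_convergences-goal1} and forces the telescoping sum to converge. By completeness and the compactness of $(\X,D)$, the sequence converges to some $\Gamma\in\X$. Since $Q_m=(\Lambda-\uu_m)\cap(A_m-\uu_m)$ agrees with $\Lambda-\uu_m$ inside $B(\mathbf{0},R_{m-1})\supseteq B(\mathbf{0},1/\varepsilon_{m-1})$, the definition \eqref{eq:CF-metric} gives $D(Q_m,\Lambda-\uu_m)\le 1/R_{m-1}\le\varepsilon_{m-1}$, and combining with the convergence $\Lambda-\uu_m\to\Gamma$ yields \eqref{eq:patch_convergences-goal2} together with $Q_m\to\Gamma$.

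Finally, for \eqref{eq:patch_convergences-goal3}: $\Gamma$ is a Delone set with separation constant at least $r(\Lambda)$ (preserved under translations and Chabauty--Fell limits), and so is $\Lambda-\uu_m$. Since $A_m-\uu_m$ is a translated copy of an element of $\QQ_d^*$ contained in $B(\mathbf{0},1/\varepsilon_m)$ by \eqref{eq:patch_convergences-goal0}, and since $D(\Gamma,\Lambda-\uu_m)<\varepsilon_m$ for $m$ large (again after the summability modification), Lemma \ref{lem:discrepancy_on_subsets} applied to $\Gamma$ and $\Lambda-\uu_m$ with $A=A_m-\uu_m$ yields $|\#((A_m-\uu_m)\cap\Gamma)-\#((A_m-\uu_m)\cap(\Lambda-\uu_m))|\le c_1\varepsilon_m^d\vol_{d-1}(\partial A_m)$. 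Since $\#Q_m$ equals the second cardinality and $\vol_{d-1}(\partial(A_m-\uu_m))=\vol_{d-1}(\partial A_m)$, this is \eqref{eq:patch_convergences-goal3} with $c_2=c_1$. The main obstacle is precisely the convergence step, where summability of $(\varepsilon_m)$ is not a priori guaranteed by the hypotheses; the trick of invoking Lemma \ref{lem:uniform_repetitivity} with a slightly sharper parameter at each stage sidesteps this without altering any of the displayed estimates.
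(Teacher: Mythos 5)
Your construction follows the same strategy as the paper (inductive choice of $\uu_m$ via Lemma \ref{lem:uniform_repetitivity}, a Cauchy/completeness argument, and Lemma \ref{lem:discrepancy_on_subsets} for the counting estimate), but the step you single out as ``the main obstacle'' is resolved incorrectly, and the correct resolution is a short observation you missed: hypotheses (1) and (2) already force geometric decay of $(\varepsilon_m)$. Indeed, $A_{m+1}$ contains a ball of radius $2R_m$ and is contained in a ball of radius $1/(2\varepsilon_{m+1})$; comparing diameters gives $\varepsilon_{m+1}\le 1/(4R_m)\le \varepsilon_m/4$, since $R_m\ge 1/\varepsilon_m$. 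Hence $\sum_{k\ge n}\varepsilon_k<2\varepsilon_n$ automatically, no modification of the construction is needed, and this same inequality is what produces the strict bounds such as $3\varepsilon_m<\varepsilon_{m-1}$ in \eqref{eq:patch_convergences-goal2}.

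Your proposed workaround --- invoking Lemma \ref{lem:uniform_repetitivity} at stage $m$ with the sharper parameter $\min(\varepsilon_{m-1},2^{-m})$ --- does not ``only strengthen \eqref{eq:patch_convergences-goal1}'': the lemma only returns a vector in $B(\yy_m,R(\varepsilon'))$, and $R(\varepsilon')$ in general grows as $\varepsilon'$ shrinks, with no guarantee that $R(\varepsilon')\le R_{m-1}$. Hypothesis (2) provides room of radius only $2R_{m-1}$ inside $A_m$, so you lose the containment $\uu_m\in B(\yy_m,R_{m-1})$, which is part of the conclusion being proved: it is exactly what gives $B({\bf0},R_{m-1})\subset A_m-\uu_m$ in \eqref{eq:patch_convergences-goal0}, and it is what Lemma \ref{lem:distinct_seq.==>non-BD_Delone_sets} later uses to bound $\norm{\vv_m}_\infty\le 2R_{m-1}$. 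Two smaller quantitative slips: your route to \eqref{eq:patch_convergences-goal2} only yields $D(Q_m,\Gamma)\le 1/R_{m-1}+\sum_{k\ge m}\varepsilon_k$, which is of order $\tfrac32\varepsilon_{m-1}$ rather than $<\varepsilon_{m-1}$; and for \eqref{eq:patch_convergences-goal3} the bound $D(\Gamma,\Lambda-\uu_m)<2\varepsilon_m$ cannot be fed directly into Lemma \ref{lem:discrepancy_on_subsets}, because $A_m-\uu_m$ is only known to lie in $B({\bf0},1/\varepsilon_m)$, not in $B({\bf0},1/(2\varepsilon_m))$. The paper sidesteps both by comparing $Q_m$ with $Q_{m+1}$ and $\Lambda-\uu_m$ with $\Lambda-\uu_{m+1}$ at scale exactly $\varepsilon_m$ and applying Lemma \ref{lem:discrepancy_on_subsets} twice. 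These last points are fixable constant-chasing; the missing decay observation and the broken workaround are the substantive gap.
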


\begin{proof}
	First observe that by assumptions (1) and (2) 
	\begin{equation}\label{eq:patch_convergences-eps_decay_geometrically}
		\varepsilon_{m+1}\le \frac{1}{4R_{m}} \le \frac{1}{4}\varepsilon_m
	\end{equation}
	holds for every $m\in\N$. 
	In particular, the series $\sum_{m=1}^{\infty}\varepsilon_m$ is convergent.
	
	We define the vectors $\uu_m$, and hence the patches $Q_m$, inductively. 
	\begin{itemize}
		\item 
		By (1), $A_1$ is in particular contained in a ball of radius $1/\varepsilon_1$. Let $\uu_1$ be such that $Q_1= (\Lambda \cap A_1) -\uu_1$ is contained in $B({\bf0},1/\varepsilon_1)$. 	
	\end{itemize}
	Assume that the vectors $\uu_j$, and thus the patches $Q_j = (\Lambda \cap A_j) - \uu_j$, are defined for $j\in\{1, \ldots, m\}$ such that for every $2\le j\le m$ we have 
	\begin{enumerate}
		\item[(i)]
		$B({\bf0},R_{j-1}) \subset A_j - \uu_j \subset B({\bf0},1/\varepsilon_j)$.
		\item[(ii)]
		$D(\Lambda-\uu_j,\Lambda-\uu_{j-1})<\varepsilon_{j-1}$.
	\end{enumerate}
	We define $\uu_{m+1}$ as follows.
	\begin{itemize}
		\item 
		By (2), $A_{m+1}$ contains a ball of the form $B(\yy_{m+1},2R_{m})$. By Lemma \ref{lem:uniform_repetitivity}, let $\uu_{m+1}\in B(\yy_{m+1},R_{m})$ be a vector satisfying  
		\begin{equation*}\label{eq:patch_convergences-def_of_u_m+1}
			D(\Lambda-\uu_m, \Lambda-\uu_{m+1})<\varepsilon_m.
		\end{equation*} 
		Thus (ii) for $j = m+1$ holds. 
		Note that since $B(\yy_{m+1},2R_{m}) \subset A_{m+1}$ and $\uu_{m+1}\in B(\yy_{m+1},R_{m})$ we have 
		\begin{equation*}\label{eq:patch_convergences-A_m_contains_large_ball}
			B({\bf0},R_{m}) \subset A_{m+1} - \uu_{m+1}.
		\end{equation*} 
		By (1), $A_{m+1} - \uu_{m+1} \subset B(\xx_{m+1}-\uu_{m+1},1/2\varepsilon_{m+1})$ and so $A_{m+1} - \uu_{m+1}$ contains the origin. Then by the triangle inequality, $A_{m+1} - \uu_{m+1}$ is contained in $B({\bf0},1/\varepsilon_{m+1})$, completing the proof of (i) for $j=m+1$.	
	\end{itemize}
	This completes the construction of the vectors $\uu_m$ and the patches $Q_m$. Next we show that the sequence $(Q_m)_{m\in\N}$ is a Cauchy sequence. Fix some $\varepsilon>0$ and let $M$ be so that $2\varepsilon_M<\varepsilon$. Let $m>n>M$, and note that by property (ii) we have $D(\Lambda-\uu_{k+1},\Lambda-\uu_k)<\varepsilon_k$, for every $k\ge M$. Then by the triangle inequality, 
	\begin{equation}\label{eq:patch_convergences-Cauchy_seq1}
	D(\Lambda-\uu_m,\Lambda-\uu_n)\le \sum_{k=n}^{m-1}D(\Lambda-\uu_{k+1},\Lambda-\uu_{k})< \sum_{k=n}^{m-1}\varepsilon_k < 2\varepsilon_n< 2\varepsilon_M<\varepsilon,
	\end{equation}
	where the third inequality follows from \eqref{eq:patch_convergences-eps_decay_geometrically}. By property (i), for every $j\in\N$ the point sets $Q_j$ and $\Lambda - \uu_j$ in particular coincide on the ball $B({\bf0},1/\varepsilon_{j-1})$. Since $m,n>M$, the sets $\Lambda - \uu_n$ and $Q_n$ coincide on $B({\bf0},1/\varepsilon)$, and similarly for $\Lambda-\uu_m$ and $Q_m$. Therefore, relying on \eqref{eq:patch_convergences-Cauchy_seq1}, for every $m>n>M$ we have 
	\begin{equation}\label{eq:patch_convergences-Cauchy_seq2}
	D(Q_m,Q_n)\le D\Big((\Lambda-\uu_m)\cap B({\bf0},1/\varepsilon),(\Lambda-\uu_n)\cap B({\bf0},1/\varepsilon)\Big) < \varepsilon. 
	\end{equation}
	Thus $(Q_m)_{m\in\N}$ is a Cauchy sequence. The space $(\X,D)$ is complete, as a compact metric space, hence the limit $\Gamma \df \lim_{m\to\infty}Q_m = \lim_{m\to\infty}\Lambda-\uu_m$ exists and belongs to $\X$. 
	
	It is left to prove \eqref{eq:patch_convergences-goal0}, \eqref{eq:patch_convergences-goal1}, \eqref{eq:patch_convergences-goal2} and \eqref{eq:patch_convergences-goal3}. 
	First observe that \eqref{eq:patch_convergences-goal0} and \eqref{eq:patch_convergences-goal1} follow immediately from the construction, see properties (i) and (ii). 
	To see \eqref{eq:patch_convergences-goal2}, let $m\in\N$ and let $k>m$ be so that $D(Q_k,\Gamma)<\varepsilon_m$. Repeating the computations in \eqref{eq:patch_convergences-Cauchy_seq1} and \eqref{eq:patch_convergences-Cauchy_seq2} yields that $D(Q_m,Q_k)<2\varepsilon_m$, and by \eqref{eq:patch_convergences-eps_decay_geometrically} we have 
	\[
	D(Q_m,\Gamma)\le D(Q_m,Q_k)+D(Q_k,\Gamma) < 3\varepsilon_m < \varepsilon_{m-1}.
	\]
	
	Finally, we prove \eqref{eq:patch_convergences-goal3}. By \eqref{eq:patch_convergences-goal0} we have $A_m - \uu_m \subset B({\bf0},1/\varepsilon_m)$ and by \eqref{eq:patch_convergences-goal2} we have  $D(\Gamma,Q_{m+1}) < \varepsilon_m$. Thus by Lemma \ref{lem:discrepancy_on_subsets} with $A = A_m - \uu_m$  we obtain 
	\begin{equation}\label{eq:patch_convergences-discrepancy1}
		 \absolute{\#(\Gamma \cap (A_m - \uu_m)) - \#(Q_{m+1} \cap (A_m - \uu_m))} \le 
	 c_1\cdot\varepsilon_m^d\cdot \vol_{d-1}(\partial A_m).
	\end{equation}
	By \eqref{eq:patch_convergences-goal1} we have $D(\Lambda-\uu_{m},\Lambda-\uu_{m+1})<\varepsilon_{m}$, and applying Lemma \ref{lem:discrepancy_on_subsets} once again we get 
	\[
		\absolute{\#\left((\Lambda-\uu_{m+1}) \cap (A_m - \uu_m)\right) - \#\left( (\Lambda-\uu_{m}) \cap (A_m - \uu_m)\right) } \le 
		c_1\cdot\varepsilon_m^d\cdot \vol_{d-1}(\partial A_m).
	\]
	By the definition of the $Q_m$'s, and since $A_m - \uu_m \subset A_{m+1}-\uu_{m+1}$ by	\eqref{eq:patch_convergences-goal0}, this is exactly 
	\begin{equation*}\label{eq:patch_convergences-discrepancy2}
		\absolute{\#\left(Q_{m+1} \cap (A_m - \uu_m)\right) - \#Q_{m}  } \le 
		c_1\cdot\varepsilon_m^d\cdot \vol_{d-1}(\partial A_m).
	\end{equation*}
	Combining this with \eqref{eq:patch_convergences-discrepancy1}  yields \eqref{eq:patch_convergences-goal3} and completes the proof of the theorem.
\end{proof}

\section{Finding patches with large discrepancy} \label{sec:construction_of_the_patches}
The goal of this section is to prove the following proposition, which will be used in our proof of Theorem \ref{thm:main_result_general} in \S \ref{sec:proof of main}.

\begin{prop}\label{prop:finding_traget_patches}
	Let $\Lambda\subset\R^d$ be a non-uniformly spread Delone set. Then there exist a sequence $(A_m)_{m\in\N}$ of sets in $\QQ_d^*$ and a sequence $(\xx_m)_{m\in\N}$ of vectors in $\Z^d$ so that 
	 \begin{equation}\label{eq:finding_traget_patches}
		 \frac{\absolute{ \#(\Lambda \cap A_m) - \#(\Lambda \cap (A_m+\xx_m)) }}{\vol_{d-1}(\partial A_m)} \xrightarrow{m\to\infty} \infty.	 
	 \end{equation}
\end{prop}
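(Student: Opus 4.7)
The plan is to prove the contrapositive: assume there is a constant $C$ such that
\[
|\#(\Lambda\cap A) - \#(\Lambda\cap(A+\xx))| \le C\cdot\vol_{d-1}(\partial A)
\]
for every $A\in\QQ_d^*$ and $\xx\in\Z^d$, and deduce that $\Lambda$ is uniformly spread, contradicting the hypothesis on $\Lambda$. Combined with Laczkovich's characterisation \cite[Theorem 1.1]{Laczk} (which says the absolute discrepancy bound $|\#(\Lambda\cap A) - \alpha\vol(A)| \le C'\vol_{d-1}(\partial A)$ for all $A\in\QQ_d^*$ implies BD-equivalence to $\alpha^{-1/d}\Z^d$), it suffices to produce such a bound with $\alpha$ equal to the density of $\Lambda$.

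First I would show that the density $\rho := \lim_m \#(\Lambda\cap[0,m)^d)/m^d$ exists. Tiling $[0,km)^d$ by the $k^d$ translates of $[0,m)^d$ through vectors in $m\Z^d \subset \Z^d$ and summing the hypothesis over tiles yields $|\rho_{km} - \rho_m| \le 2Cd/m$ for $\rho_m := \#(\Lambda\cap[0,m)^d)/m^d$. A common-multiple argument promotes this to $|\rho_{m_1} - \rho_{m_2}| \le 2Cd(m_1^{-1} + m_2^{-1})$, so $(\rho_m)$ is Cauchy and converges to a limit $\rho$ with $|\rho_m - \rho| \le 2Cd/m$.

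The main step is to extend the discrepancy bound to every $A\in\QQ_d^*$ by averaging. Switching the order of summation gives
\[
\sum_{\xx\in[0,N)^d\cap\Z^d}\#\bigl(\Lambda\cap(A+\xx)\bigr) = \sum_{\yy\in\Lambda}\#\bigl(\Z^d\cap[0,N)^d\cap(\yy-A)\bigr).
\]
Because $A$ is a disjoint union of half-open unit cubes at integer corners, one has the key identity $\#(\Z^d\cap(\yy-A)) = \vol(A)$ for every $\yy\in\R^d$; hence the inner count equals $\vol(A)$ when $\yy-A\subset[0,N)^d$, vanishes when $(\yy-A)\cap[0,N)^d=\emptyset$, and is at most $\vol(A)$ otherwise. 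Since the "otherwise" set of $\yy$'s lies within a $\diam{A}$-neighborhood of $\partial[0,N)^d$, which contains $O(\diam{A}\cdot N^{d-1})$ points of $\Lambda$ by relative denseness,
\[
\sum_{\xx\in[0,N)^d\cap\Z^d}\#\bigl(\Lambda\cap(A+\xx)\bigr) = \vol(A)\cdot\#(\Lambda\cap[0,N)^d) + O\!\left(\vol(A)\cdot\diam{A}\cdot N^{d-1}\right).
\]
Dividing by $N^d$ and invoking $\#(\Lambda\cap[0,N)^d)/N^d\to\rho$, the translate-average converges to $\rho\vol(A)$ as $N\to\infty$. But the hypothesis forces every summand, and thus the average, to lie within $C\vol_{d-1}(\partial A)$ of $\#(\Lambda\cap A)$; passing to the limit delivers $|\#(\Lambda\cap A) - \rho\vol(A)| \le C\vol_{d-1}(\partial A)$ for every $A\in\QQ_d^*$, and Laczkovich completes the contradiction.

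The principal technical point is the averaging step: one must verify that the boundary error $O(\vol(A)\cdot\diam{A}\cdot N^{d-1})$ is indeed $o(N^d)$ with $A$ held fixed, so that dividing by $N^d$ eliminates it and the pointwise bound inherited from the hypothesis is preserved as $N\to\infty$. The remaining ingredients — the density tiling argument and the application of \cite[Theorem 1.1]{Laczk} — are then essentially bookkeeping.
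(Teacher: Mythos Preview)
Your argument is correct (modulo two cosmetic slips: the upper bound on bad $\yy$'s uses uniform discreteness, not relative denseness, and the width of the boundary layer depends on the position of $A$, not merely $\diam{A}$; since $A$ is held fixed neither matters). The route, however, is genuinely different from the paper's.

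The paper argues \emph{constructively}. It first disposes of the case $\Delta_*(\Lambda)<\Delta^*(\Lambda)$ via a separate averaging lemma (Lemma~\ref{lem:no_density==>cubes_of_different_density}), and when the central density $\Delta$ exists it invokes the paper's own Theorem~\ref{thm:non-BD_consequence} (the converse of the FSS criterion, proved via Hall's marriage theorem) with $\Lambda_0=\Delta^{-1/d}\Z^d$ and $\Lambda_1=\Lambda$ to produce sets $A_m$ with large lattice-versus-$\Lambda$ discrepancy; a second averaging lemma (Lemma~\ref{lem:averaging_densities}) then locates translates $A_m+\xx_m$ on which $\Lambda$ has nearly the expected density, yielding \eqref{eq:finding_traget_patches}.

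Your contrapositive is more economical: a single averaging identity plus Laczkovich's original characterisation \cite[Theorem~1.1]{Laczk} suffices, with no case split (existence of the density falls out of the assumed bound) and no appeal to Theorem~\ref{thm:non-BD_consequence}. In fact, since the only other use of Theorem~\ref{thm:non-BD_consequence} in the paper is the easy direction $(ii)\Rightarrow(i)$ at the end of Lemma~\ref{lem:distinct_seq.==>non-BD_Delone_sets}, your argument shows that the hard direction $(i)\Rightarrow(ii)$ is not actually needed for the proof of Theorem~\ref{thm:main_result_general}. What the paper's approach buys in return is explicitness: the sets $A_m$ are exhibited directly rather than obtained by contradiction.
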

Let $\Lambda \subset \R^d$ be a Delone set. We define the \emph{central lower density} and the \emph{central upper density} of $\Lambda$ respectively by 
\[\Delta_*(\Lambda) \df \liminf_{t\to\infty} \frac{\#\left(B({\bf0},t) \cap\Lambda\right)}{\vol(B({\bf0},t))}
\qquad 
\Delta^*(\Lambda) \df \limsup_{t\to\infty} \frac{\#\left(B({\bf0},t)\cap\Lambda\right)}{\vol(B({\bf0},t))}.\]
If the limit  $\lim_{t\to\infty} \#\left(B({\bf0},t)\cap\Lambda\right)/\vol(B({\bf0},t))$ exists, it is called the \emph{central density} of $\Lambda$ and is denoted by $\Delta(\Lambda)$.

We begin with the following lemma.

\begin{lem}\label{lem:averaging_densities}
	Let $\Lambda$ be a Delone set, $\gamma>0$ and $A\in\QQ_d^*$. Then for every $\varepsilon>0$ there exists $K>0$ such that for every integer $k\ge K$:
	\begin{itemize}
	\item[(1)]
	if $\frac{\#(\Lambda \cap B({\bf0},k))}{\vol(B({\bf0},k))} \ge \gamma$ then the ball $B({\bf0},k)$ contains  $A+\xx$, a translated copy of $A$ with $\xx\in\Z^d$, such that 
	\[\frac{\#(\Lambda \cap (A+\xx))}{\vol(A)}\ge \gamma-\varepsilon.\] 
	\item[(2)]
	if $\frac{\#(\Lambda \cap B({\bf0},k))}{\vol(B({\bf0},k))} \le \gamma$ then the ball $B({\bf0},k)$ contains  $A+\xx$, a translated copy of $A$ with $\xx\in\Z^d$, such that 
	\[\frac{\#(\Lambda \cap (A+\xx))}{\vol(A)}\le \gamma+\varepsilon.\] 
	\end{itemize}
\end{lem}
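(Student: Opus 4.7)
The plan is a standard double-counting (averaging) argument over integer translates of $A$. Write $A$ as a disjoint union of $\vol(A)$ unit cubes from $\QQ_d$. Since the cubes in $\QQ_d$ tile $\R^d$, for each unit cube component of $A$ and each point $p\in\R^d$ there is a unique $\xx\in\Z^d$ placing $p$ inside that component of $A+\xx$, and these $\xx$ are distinct across components. Hence, for every $p\in\R^d$,
\[
\#\{\xx\in\Z^d : p\in A+\xx\}=\vol(A).
\]

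Fix $D>0$ with $A\subset B(\mathbf{0},D)$, and set $S_k\df\{\xx\in\Z^d : A+\xx\subset B(\mathbf{0},k)\}$. Standard lattice-point counting yields $|S_k|=\vol(B(\mathbf{0},k))+O(k^{d-1})$ as $k\to\infty$, and if $\|p\|_\infty<k-2D$ then every $\xx\in\Z^d$ satisfying $p\in A+\xx$ automatically lies in $S_k$. Interchanging the order of summation,
\[
\sum_{\xx\in S_k}\#(\Lambda\cap(A+\xx)) \;=\; \sum_{p\in\Lambda}\#\{\xx\in S_k : p\in A+\xx\}.
\]

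For part (1), bound the right-hand side from below by restricting to $p\in\Lambda\cap B(\mathbf{0},k-2D)$, each contributing exactly $\vol(A)$. Since $\Lambda$ is relatively dense, one has $\#(\Lambda\cap(B(\mathbf{0},k)\setminus B(\mathbf{0},k-2D)))=O(k^{d-1})$, so
\[
\frac{1}{|S_k|\,\vol(A)}\sum_{\xx\in S_k}\#(\Lambda\cap(A+\xx)) \;\ge\; \frac{\gamma\,\vol(B(\mathbf{0},k))-O(k^{d-1})}{\vol(B(\mathbf{0},k))+O(k^{d-1})} \;=\; \gamma-O(1/k).
\]
For part (2), bound the right-hand side from above using $\#\{\xx\in S_k:p\in A+\xx\}\le\vol(A)$, which moreover vanishes when $p\notin B(\mathbf{0},k)$, to obtain an average at most $\gamma+O(1/k)$. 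In both cases one chooses $K$ so that the $O(1/k)$ error is below $\varepsilon$ for all $k\ge K$, and the pigeonhole (averaging) principle supplies a translate $A+\xx$ with $\xx\in S_k\subset\Z^d$ realising the required bound.

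The only technical point — and the ``obstacle'' — is uniform control of the $O(k^{d-1})$ boundary corrections, both in the lattice-point count $|S_k|$ and in the $\Lambda$-point count across the annulus $B(\mathbf{0},k)\setminus B(\mathbf{0},k-2D)$. Each of these corrections is controlled by the uniform discreteness and relative denseness of $\Lambda$, together with the elementary estimate $\vol((\partial B(\mathbf{0},k))^{(+2D)})=O(k^{d-1})$, and both are negligible compared with the $\Theta(k^d)$ main terms as $k\to\infty$.
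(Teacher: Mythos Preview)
Your argument is correct and is essentially the same double-counting (averaging) proof the paper gives: both sum $\#(\Lambda\cap(A+\xx))$ over all integer translates $A+\xx\subset B(\mathbf{0},k)$, use that each point is covered exactly $\vol(A)$ times away from a boundary layer of volume $O(k^{d-1})$, and conclude by pigeonhole. One small slip: the upper bound $\#(\Lambda\cap(B(\mathbf{0},k)\setminus B(\mathbf{0},k-2D)))=O(k^{d-1})$ follows from \emph{uniform discreteness}, not relative denseness (you invoke the correct property in your final paragraph).
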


\begin{proof}
	This is a simple averaging argument. We prove property (1), the proof of property (2) is similar.
	
		Denote by $\rho$ the diameter of the set $A$. 
	For a large integer $k$ we write 
	$B({\bf0},k) = B_{[-\rho]} \sqcup (\partial B)_{[+\rho]}$, where $B_{[-\rho]}, (\partial B)_{[+\rho]}\in\QQ_d^*$ are defined by 
	\begin{equation}\label{eq:averaging_lemma-decomposition_of_B(0,k)}
	\begin{aligned}
	B_{[-\rho]}&\df 
	\bigcup\left\{Q\in\QQ_d \mid Q\subset B({\bf0},k), \dist(Q,\partial B({\bf0},k))>\rho\right\}\\(\partial B)_{[+\rho]}&\df B({\bf0},k)\smallsetminus B_{[-\rho]},
		\end{aligned}
	\end{equation}
	where $\dist(X,Y) \df \inf\{\norm{\xx - \yy}_\infty \mid \xx\in X,  \yy\in Y\}$. 
	
	Given $\varepsilon>0$ we pick $K\in\N$ large enough so that for every integer $k\ge K$ we have
	\begin{equation}\label{eq:averaging_lemma-0}
	\frac{\vol\left((\partial B)_{[+\rho]}\right)}{\vol(B({\bf0},k))}<\frac{\varepsilon}{2}.
	\end{equation}
	
	Let $k\ge K$ such that  
\begin{equation}\label{eq:averaging_lemma-1}
	\frac{\#(\Lambda \cap B({\bf0},k))}{\vol(B({\bf0},k))}\ge \gamma,
\end{equation}
	and let $N_k \df\{\xx\in\Z^d \mid A+\xx\subset B({\bf0},k) \}$. 
	By way of contradiction, assume that 
	\begin{equation}\label{eq:averaging_lemma-2}
	\forall \xx\in N_k:\quad  \#\left(\Lambda \cap (A+\xx)\right)< (\gamma-\varepsilon)\vol(A). 
	\end{equation}
	
	Notice that the number of cubes from $\QQ_d$ that form $A$ is $\vol(A)$. Then by counting the points of $\Lambda$ (with multiplicity) in all the sets $A+\xx$, $\xx\in N_k$, the points in every unit lattice cube in $B_{[-\rho]}$ is counted exactly $\vol(A)$ times. Thus 
	\begin{equation}\label{eq:averaging_lemma-3}
	\#N_k(\gamma - \varepsilon)\vol(A)
	 \stackrel{\eqref{eq:averaging_lemma-2}}> 
	\sum_{\xx\in N_k} \#(\Lambda \cap (A+\xx)) \ge 
	\vol(A)\cdot  \# \left( \Lambda \cap B_{[-\rho]} \right).
	\end{equation}
	Note that $\# N_k\le\vol(B({\bf0},k))$, then dividing both sides of \eqref{eq:averaging_lemma-3} by $\vol(A)\cdot\vol(B({\bf0},k))$ yields 
\[
\gamma - \varepsilon>
\frac{\# \left(\Lambda \cap B_{[-\rho]} \right)}{\vol(B({\bf0},k))} \stackrel{\eqref{eq:averaging_lemma-decomposition_of_B(0,k)}}\ge
\frac{\# \left( \Lambda \cap B({\bf0},k) \right)}{\vol(B({\bf 0},k))} - 
\frac{\# \left( \Lambda \cap (\partial B)_{[+\rho]} \right)}{\vol(B({\bf0},k))} \stackrel{\eqref{eq:averaging_lemma-0},  \eqref{eq:averaging_lemma-1}}>
\gamma - \frac{\varepsilon}{2}, 
\]
a contradiction.
\end{proof}

\begin{lem}\label{lem:no_density==>cubes_of_different_density}
	Suppose that $\Lambda$ is a Delone set in $\R^d$ and that $\Delta_*(\Lambda)<\Delta^*(\Lambda)$. Then there exist $\alpha<\beta$, integers $a_k\to\infty$ and $\xx_k\in\Z^d$ such that 
	\[
	\frac{\#\left( \Lambda \cap B({\bf0},a_k) \right)}{\vol(B({\bf0},a_k))} \le \alpha \quad\text{ and }\quad 
	\frac{\#\left( \Lambda \cap B(\xx_k,a_k) \right)}{\vol(B(\xx_k,a_k))} \ge \beta.
	\] 
\end{lem}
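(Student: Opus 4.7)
The plan is to choose thresholds $\alpha < \beta$ strictly between the central lower and upper densities, extract integer radii $a_k \to \infty$ on which the density at the origin stays $\le \alpha$ directly from the definition of $\Delta_*(\Lambda)$, and then, for each $a_k$, locate a $\Z^d$-translated copy of the cube $B(\bm 0, a_k)$ of density $\ge \beta$ by averaging against a far larger ball whose density approaches $\Delta^*(\Lambda)$.

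I would fix $\alpha, \beta$ and a small $\eta > 0$ with
\[
\Delta_*(\Lambda) < \alpha < \beta < \beta + \eta < \Delta^*(\Lambda),
\]
and write $f(t) \df \#(\Lambda \cap B(\bm 0, t))/\vol(B(\bm 0, t))$. To produce the radii $a_k$, I would first argue that $f$ restricted to integers still has $\liminf \le \Delta_*(\Lambda) < \alpha$: if $f(n) > \alpha$ held for all large integers $n$, then for real $t$ with $n = \lfloor t \rfloor$ the inclusions $B(\bm 0, n) \subset B(\bm 0, t) \subset B(\bm 0, n+1)$ give $f(t) \ge f(n) \cdot (n/(n+1))^d$, forcing $\liminf_t f(t) \ge \alpha$ and contradicting $\Delta_*(\Lambda) < \alpha$. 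The symmetric sandwich ($f(t) \le f(\lceil t \rceil) \cdot (\lceil t \rceil/\lfloor t \rfloor)^d$) yields an increasing sequence of integers $b$ with $f(b) \ge \beta + \eta$, since $\limsup_t f(t) = \Delta^*(\Lambda) > \beta + \eta$.

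For each $a_k$ from the previous step, the cube $B(\bm 0, a_k)$ belongs to $\QQ_d^*$, and I would apply Lemma \ref{lem:averaging_densities}(1) with $A = B(\bm 0, a_k)$, $\gamma = \beta + \eta$, and error tolerance $\varepsilon = \eta$. This supplies a threshold $K_k$ such that any integer $b \ge K_k$ with $f(b) \ge \beta + \eta$ forces the existence of some $\xx_k \in \Z^d$ with $B(\bm 0, a_k) + \xx_k \subset B(\bm 0, b)$ and
\[
\frac{\#(\Lambda \cap B(\xx_k, a_k))}{\vol(B(\xx_k, a_k))} = \frac{\#(\Lambda \cap (B(\bm 0, a_k) + \xx_k))}{\vol(B(\bm 0, a_k))} \ge (\beta + \eta) - \eta = \beta.
\]
Picking any sufficiently large integer $b$ with $f(b) \ge \beta + \eta$ (available from the previous paragraph) then delivers $\xx_k$ and completes the construction of the pair $(a_k, \xx_k)$.

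The substantive content has already been isolated in the averaging Lemma \ref{lem:averaging_densities}, so I do not expect a real obstacle; the only mild nuisance is the real-to-integer reduction hidden in the definitions of $\Delta_*$ and $\Delta^*$, which is handled cleanly by the volume-ratio squeeze above.
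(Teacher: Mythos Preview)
Your argument is correct and follows essentially the same route as the paper: pick thresholds strictly between $\Delta_*(\Lambda)$ and $\Delta^*(\Lambda)$, extract integer radii realizing low density at the origin and high density in some large ball, and feed the low-density cube into the averaging Lemma~\ref{lem:averaging_densities}(1) inside the high-density ball to produce the $\Z^d$-translate $\xx_k$. The only cosmetic differences are that the paper fixes the thresholds symmetrically as $\tilde\alpha+\tfrac{\tilde\beta-\tilde\alpha}{3}$ and $\tilde\beta-\tfrac{\tilde\beta-\tilde\alpha}{3}$ and handles the real-to-integer passage with a one-line remark about boundary versus volume growth, whereas you spell out the volume-ratio squeeze explicitly.
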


\begin{proof}
	By the assumption on the densities, there exist sequences $a_k, b_l\to\infty$  so that 
	\[
	\lim_{k\to\infty}\frac{\#\left( \Lambda \cap B({\bf0},a_k) \right)}{\vol(B({\bf0},a_k))} = \tilde\alpha 
	\quad\text{ and }\quad 
	\lim_{l\to\infty}\frac{\#\left( \Lambda \cap B({\bf0},b_l) \right)}{\vol(B({\bf0},b_l))} = \tilde\beta,
	\]
	where $\tilde\alpha<\tilde\beta$. Since $\Lambda$ is uniformly discrete, and since the $(d-1)$-volume of the boundary of a cube grows slower than the cube's volume, we may assume that the numbers $a_k,b_k$ are integers. Let $\delta<\frac{\tilde\beta-\tilde\alpha}{3}$ and fix $K\in\N$ such that for every $k,l\ge K$ we have 
	\begin{equation}\label{eq:no_density==>cubes_different_density-1}
	\frac{\#\left( \Lambda \cap B({\bf0},a_k) \right)}{\vol(B({\bf0},a_k))} \le \tilde\alpha +\delta 
	\quad\text{ and }\quad 
	\frac{\#\left( \Lambda \cap B({\bf0},b_l) \right)}{\vol(B({\bf0},b_l))} \ge \tilde\beta - \delta.
	\end{equation}
	For every $k$, applying Lemma \ref{lem:averaging_densities} with $A=B({\bf0},a_k)$, $\varepsilon=\frac{\tilde\beta-\tilde\alpha}{3}-\delta>0$, and $\tilde\beta-\delta$ in the role of $\gamma$, and combining this with \eqref{eq:no_density==>cubes_different_density-1}, we find a large enough $l=l_k$ and $\xx_k\in\Z^d$ so that $B({\bf0},b_l)$ contains the ball $B(\xx_k,a_k)$, which satisfies    
	\begin{equation}\label{eq:no_density==>cubes_different_density-2}
		\frac{\#\left( \Lambda \cap B(\xx_k,a_k) \right)}{\vol(B(\xx_k,a_k))} \ge (\tilde\beta - \delta) - \varepsilon = \tilde\beta -\frac{\tilde\beta-\tilde\alpha}{3}.
	\end{equation}
	Setting $\alpha\df \tilde\alpha + \frac{\tilde\beta-\tilde\alpha}{3}$ and $\beta\df \tilde\beta - \frac{\tilde\beta-\tilde\alpha}{3}$, the assertion follows from \eqref{eq:no_density==>cubes_different_density-1} and \eqref{eq:no_density==>cubes_different_density-2}.
\end{proof}

\begin{proof}[Proof of Proposition \ref{prop:finding_traget_patches}]
	Let $\Lambda\subset\R^d$ be a non-uniformly spread Delone set. 
	In view of Lemma \ref{lem:no_density==>cubes_of_different_density} we may further assume that $\Delta \df \Delta(\Lambda)$ exists. 
	For $\alpha\neq \Delta^{-1/d}$ the Delone sets $\alpha\Z^d$ and $\Lambda$ do not have the same central density and hence there is no BD-map between them (see e.g. \cite[Corollary 3.2]{FSS}). By our assumption on $\Lambda$, there is no BD-map between $\Lambda$ and $\Delta^{-1/d}\Z^d$ as well. Applying Theorem \ref{thm:non-BD_consequence} on these two Delone sets we obtain a sequence  $(A_m)_{m\in\N}$ of sets in $ \QQ_d^*$ that satisfies 
	\[
		\frac{\absolute{ \#(\Delta^{-1/d}\Z^d \cap A_m) - \#(\Lambda \cap A_m) } }{\vol_{d-1}(\partial A_m)} \xrightarrow{m\to\infty} \infty.		
	\]
	By passing to a subsequence of $(A_m)_{m\in\N}$ we may assume that 
	\begin{equation}\label{eq:finding_patches-1}
	\frac{ \#(\Delta^{-1/d}\Z^d \cap A_m) - \#(\Lambda \cap A_m) }{\vol_{d-1}(\partial A_m)} \xrightarrow{m\to\infty} \infty, 		
	\end{equation} 
	and complete the proof using (1) of Lemma \ref{lem:averaging_densities}. In the case that $\#(\Delta^{-1/d}\Z^d \cap A_m) < \#(\Lambda \cap A_m)$  for all large values of $m$, the proof is similar using (2) of Lemma \ref{lem:averaging_densities} instead of (1).
	
	For every $m\in\N$ we pick $\varepsilon_m$ such that 
	\begin{equation}\label{eq:finding_patches-2}
		\varepsilon_m \vol(A_m)< \vol_{d-1}(\partial A_m)	
	\end{equation}
	and apply Lemma \ref{lem:averaging_densities} with $\gamma = \Delta - \varepsilon_m$, $A=A_m$ and $\varepsilon=\varepsilon_m$. Note that since $\Delta(\Lambda) = \Delta$ exists, the condition $\frac{\#(\Lambda \cap B({\bf0},k))}{\vol(B({\bf0},k))} \ge \Delta - \varepsilon_m$ is satisfied for any sufficiently large $k$. By (1) of Lemma \ref{lem:averaging_densities}, in particular, there exists 
	a vector $\xx_m\in\Z^d$ so that   
	\begin{equation}\label{eq:finding_patches-3}
	\frac{\#\left( \Lambda \cap (A_m+\xx_m) \right)}{\vol(A_m)}\ge \Delta - 2\varepsilon_m.
	\end{equation}
	By \eqref{eq:finding_patches-1}
	\begin{equation}\label{eq:finding_patches-4}
	\frac{ \#(\Delta^{-1/d}\Z^d \cap A_m) - \#\left( \Lambda \cap (A_m+\xx_m) \right) }{\vol_{d-1}(\partial A_m)} + 
	\frac{ \#\left( \Lambda \cap (A_m+\xx_m) \right) - \#(\Lambda \cap A_m) }{\vol_{d-1}(\partial A_m)}
	 \xrightarrow{m\to\infty} \infty.
	\end{equation} 
	Note that 
	\[
	\#(\Delta^{-1/d}\Z^d \cap A_m) \le \Delta \cdot \vol(A_m) + 
	c\cdot \vol_{d-1}(\partial A_m),
	\]
	where $c$ depends on $d$ and $\Delta$, and by \eqref{eq:finding_patches-3} we also have 
	\[
	\left( \Lambda \cap (A_m+\xx_m) \right) \ge (\Delta - 2\varepsilon_m)\vol(A_m).
	\]
	Then 
	\begin{equation*}\label{eq:finding_patches-5}
	\begin{aligned}
		\#(\Delta^{-1/d}\Z^d \cap A_m) - \#\left( \Lambda \cap (A_m+\xx_m) \right) & \le c \cdot \vol_{d-1}(\partial A_m) + 2\varepsilon_m\vol(A_m) \\ 
		& \stackrel{\eqref{eq:finding_patches-2}}\le c'\cdot  \vol_{d-1}(\partial A_m),	
	\end{aligned}
	\end{equation*}
	where $c'$ depends on $d$ and $\Delta$.  Plugging this in \eqref{eq:finding_patches-4} completes the proof.
\end{proof}

\section{Proof of Theorem \ref{thm:main_result_general}}\label{sec:proof of main}

Given a non-uniformly spread Delone set $\Lambda \subset \R^d$, let $A_m\in \QQ_d^*$ and $\xx_m \in \Z^d$ be as in Proposition \ref{prop:finding_traget_patches}.  Let $\varepsilon_m >0$ be so that $A_m$ is contained in a ball of radius $1/2\varepsilon_m$. Passing to subsequences, by Corollary \ref{cor:A_m_contain_large_balls} combined with \eqref{eq:finding_traget_patches} we may assume that $A_{m}$ contains a ball of radius $2R_{m-1}$, where $R_m$ is as in Proposition \ref{thm:constructing_Delone_sets_as_limits_in_CF}. We thus have 
\begin{equation}\label{eq:A_m_and_B_m_1}
B\left( \yy_m,2R_{m-1}\right) \subset A_m \subset B\left( \zz_m, 1/2\varepsilon_m \right)
\end{equation} 
for some $\yy_m, \zz_m\in\R^d$. Denote  
\begin{equation}\label{eq:B_m,p_m,q_m_def}
B_m \df A_m + \xx_m, \qquad \pp_m \df \yy_m +\xx_m, \qquad \qq_m \df \zz_m + \xx_m.
\end{equation}
Then 
\begin{equation}\label{eq:A_m_and_B_m_2}
B\left( \pp_m,2R_{m-1}\right) \subset B_m \subset B\left( \qq_m, 1/2\varepsilon_m \right)
\end{equation} 
and so $(A_m)_{m\in\N}$ and $(B_m)_{m\in\N}$ both satisfy Proposition \ref{thm:constructing_Delone_sets_as_limits_in_CF}. 

By \eqref{eq:finding_traget_patches}, there is a sequence of constants $\mu_m\to\infty$ such that 
\begin{equation}\label{eq:mu_m_def}
\absolute{ \#(\Lambda \cap A_m) - \#(\Lambda \cap (A_m+\xx_m)) } = \mu_m \cdot \vol_{d-1}(\partial A_m).
\end{equation} 
Since $\mu_m\to\infty$, by passing to a further subsequence, 
we may assume that $\mu_m$ approaches infinity at an extremely fast rate. In particular, by defining every element in the sequence with dependence on the previous one, we may assume that 
\begin{equation}\label{eq:mu_m_propery}
	\frac{R_{m-1}^d}{\mu_m}\xrightarrow{m\to\infty} 0.
\end{equation} 

Using these notations, Theorem \ref{thm:main_result_general} follows from Lemmas \ref{lem:constructing_Lambda_omega} and \ref{lem:distinct_seq.==>non-BD_Delone_sets} below.

\begin{lem}\label{lem:constructing_Lambda_omega}
	Let $\X$ be a minimal space of Delone sets and assume that there exists $\Lambda\in\X$ that is non-uniformly spread. Let $(A_m)_{m\in\N}$ and $(B_m)_{m\in\N}$ be the sequences of sets in $\QQ^*_d$ defined in Proposition \ref{prop:finding_traget_patches} and in \eqref{eq:B_m,p_m,q_m_def}, with respect to $\Lambda$. For every word $\omega \in \{A,B\}^\N$ let $(C_m)_{m\in\N}$ be the sequence of sets in $\QQ_d^*$ defined by
	\begin{equation}\label{eq:C_m_def}
		C_m\df 
		\begin{cases}
		A_m,& \omega(m) = A \\
		B_m,& \omega(m) = B	,
		\end{cases} 	
	\end{equation}
	where $w(m)$ is the $m$'th letter in $w$. Then there exists a sequence $(\uu_m)_{m\in\N}$ of vectors in $\R^d$ so that $\Lambda_\omega = \lim_{m\to\infty} (\Lambda \cap C_m) - \uu_m$ is a Delone set in $\X$, 
	\begin{equation}\label{eq:u_m_def-constructing_Lambda_omega}
		\uu_m \in 
		\begin{cases}
		B(\yy_m, R_{m-1}), & \omega(m) = A \\
		B(\pp_m, R_{m-1}),& \omega(m) = B,
		\end{cases}
	\end{equation}
	and 
	\begin{equation}\label{eq:discrepancy_in_Lambda_omega}
	 \forall m\ge 2: \quad\absolute{\#(\Lambda_\omega \cap (C_m - \uu_m))-\#(\Lambda \cap C_m) } \le c_3\cdot \vol_{d-1}(\partial C_m),
	\end{equation}
	where $c_3$ is a constant that depends on $d$ and on the separation constant $r(\Lambda)$.
\end{lem}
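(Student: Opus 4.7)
The plan is to apply Proposition \ref{thm:constructing_Delone_sets_as_limits_in_CF} essentially verbatim to the alternating sequence $(C_m)_{m\in\N}$ given by $\omega$ in \eqref{eq:C_m_def}. First I would verify that $(C_m)$ satisfies the two geometric hypotheses of that proposition. By \eqref{eq:A_m_and_B_m_1} and \eqref{eq:A_m_and_B_m_2}, each $C_m$ is contained in a ball of radius $1/2\varepsilon_m$ (centered at $\zz_m$ if $\omega(m)=A$, at $\qq_m$ if $\omega(m)=B$) and contains a ball of radius $2R_{m-1}$ centered at
\[ \ww_m \df \begin{cases} \yy_m & \text{if } \omega(m)=A, \\ \pp_m & \text{if } \omega(m)=B. \end{cases} \]
Hence $(C_m)$ meets assumptions (1) and (2) of Proposition \ref{thm:constructing_Delone_sets_as_limits_in_CF}, with $\ww_m$ in the role of $\yy_m$ there.

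Next I would rerun the inductive construction from the proof of Proposition \ref{thm:constructing_Delone_sets_as_limits_in_CF}: given $\uu_m$, use Lemma \ref{lem:uniform_repetitivity} to select $\uu_{m+1} \in B(\ww_{m+1}, R_m) \subset C_{m+1}$ satisfying $D(\Lambda - \uu_m, \Lambda - \uu_{m+1}) < \varepsilon_m$. This selection automatically respects the placement prescribed by \eqref{eq:u_m_def-constructing_Lambda_omega}. The Cauchy computation in \eqref{eq:patch_convergences-Cauchy_seq1}--\eqref{eq:patch_convergences-Cauchy_seq2} carries over unchanged, since it uses only property (ii) of the construction, and therefore $\Lambda_\omega \df \lim_{m\to\infty}(\Lambda \cap C_m) - \uu_m$ exists in $(\X, D)$ and lies in $\X$ by completeness.

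The discrepancy bound \eqref{eq:discrepancy_in_Lambda_omega} would then follow immediately from \eqref{eq:patch_convergences-goal3} applied to $(C_m)$: since $\#((\Lambda \cap C_m) - \uu_m) = \#(\Lambda \cap C_m)$ and $\varepsilon_m \le \varepsilon_0 < 1$, one obtains
\[ \absolute{\#(\Lambda_\omega \cap (C_m - \uu_m)) - \#(\Lambda \cap C_m)} \le c_2\cdot \varepsilon_m^d \cdot \vol_{d-1}(\partial C_m) \le c_3 \cdot \vol_{d-1}(\partial C_m) \]
with $c_3 \df c_2\varepsilon_0^d$, which depends only on $d$ and $r(\Lambda)$.

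The only point where care is needed is to confirm that the proof of Proposition \ref{thm:constructing_Delone_sets_as_limits_in_CF} truly depends on the sets $A_m$ only through the two geometric hypotheses (1) and (2), and not through any compatibility between consecutive indices. Inspecting that proof shows this is the case: the inductive step only asks for a return vector inside the interior ball of the next set, which exists by uniform almost repetitivity regardless of whether $C_{m+1}$ came from the $A$-family or the $B$-family. Consequently the entire argument amounts to repackaging Proposition \ref{thm:constructing_Delone_sets_as_limits_in_CF} for the alternating family $(C_m)$, and I expect no substantive obstacle beyond writing out this observation carefully.
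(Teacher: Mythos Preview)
Your proposal is correct and follows essentially the same route as the paper: verify that $(C_m)$ satisfies hypotheses (1) and (2) of Proposition~\ref{thm:constructing_Delone_sets_as_limits_in_CF} via \eqref{eq:A_m_and_B_m_1} and \eqref{eq:A_m_and_B_m_2}, apply that proposition to obtain $\uu_m$ and the limit $\Lambda_\omega$, and read off \eqref{eq:discrepancy_in_Lambda_omega} from \eqref{eq:patch_convergences-goal3} together with $\#Q_m = \#(\Lambda\cap C_m)$ and $\varepsilon_m<1$. The paper simply invokes the proposition directly rather than rerunning its proof, but your added remark that the inductive step requires no compatibility between consecutive $C_m$'s is exactly the point that justifies this invocation.
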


\begin{proof}
	Given $\omega \in \{A,B\}^\N$, consider the sequence $(C_m)_{m\in\N}$ of sets in $\QQ_d^*$ defined by \eqref{eq:C_m_def}. By \eqref{eq:A_m_and_B_m_1} and \eqref{eq:A_m_and_B_m_2}, conditions (1) and (2) of Proposition \ref{thm:constructing_Delone_sets_as_limits_in_CF} are being satisfied for $(C_m)_{m\in\N}$, with $(\varepsilon_m)_{m\in\N}$ as described at the beginning of this section. Applying Proposition \ref{thm:constructing_Delone_sets_as_limits_in_CF} we obtain vectors $\uu_m$ satisfying \eqref{eq:u_m_def-constructing_Lambda_omega},  for which the sequence of patches $Q_m \df (\Lambda \cap C_m) - \uu_m$ is convergent. Setting $\Lambda_\omega$ to be the limit set, by 
	\eqref{eq:patch_convergences-goal3} of Proposition \ref{thm:constructing_Delone_sets_as_limits_in_CF} for every $m\ge 2$ 
	\begin{equation*}\label{eq:small_Hausdorff_dist==>small_discrepancy}
		\absolute{\#\left(\Lambda_\omega \cap (C_m - \uu_m)\right) - \# Q_m} \le c_3 \cdot \varepsilon_m^d \cdot \vol_{d-1}(\partial C_m), 
	\end{equation*}
	where $c_3$ depends on $d$ and on $r(\Lambda)$. 
	Clearly $\# Q_m = \#(\Lambda \cap C_m)$, and \eqref{eq:discrepancy_in_Lambda_omega} follows.
\end{proof}

\begin{lem}\label{lem:distinct_seq.==>non-BD_Delone_sets}
	Let $\X$ be a minimal space of Delone sets and assume that there exists $\Lambda\in\X$ that is non-uniformly spread.  Let $\eta, \sigma\in \{A,B\}^\N$ be two words that differ in infinitely many places. Then the Delone sets $\Lambda_\eta$ and $\Lambda_\sigma$ defined in Lemma \ref{lem:constructing_Lambda_omega} are BD-non-equivalent.	
\end{lem}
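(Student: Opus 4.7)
The strategy is to verify the hypothesis of Theorem~\ref{thm:non-BD_condition} for the pair $(\Lambda_\eta,\Lambda_\sigma)$ by exhibiting a sequence of sets (translated copies of elements of $\QQ_d^*$, which the proof of Theorem~\ref{thm:non-BD_consequence} shows to be permitted) on which the point-count discrepancy divided by the boundary volume diverges. The source of the divergence is the defining property \eqref{eq:mu_m_def} of the patches, transported from $\Lambda$ to $\Lambda_\eta$ and $\Lambda_\sigma$ via the control \eqref{eq:discrepancy_in_Lambda_omega} supplied by Lemma~\ref{lem:constructing_Lambda_omega}.

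Pass to an infinite subset $I\subset\N$ of indices where $\eta$ and $\sigma$ disagree and assume without loss of generality that $\eta(m)=A$ and $\sigma(m)=B$ for every $m\in I$, so $C_m^\eta=A_m$ and $C_m^\sigma=B_m=A_m+\xx_m$. Using \eqref{eq:u_m_def-constructing_Lambda_omega}, write $\uu_m^\eta=\yy_m+\vv_m^\eta$ and $\uu_m^\sigma=\pp_m+\vv_m^\sigma=\yy_m+\xx_m+\vv_m^\sigma$ with $\vv_m^\eta,\vv_m^\sigma\in B(\mathbf{0},R_{m-1})$. A direct computation gives
\[E_m\df C_m^\eta-\uu_m^\eta=A_m-\yy_m-\vv_m^\eta,\qquad E_m'\df C_m^\sigma-\uu_m^\sigma=A_m-\yy_m-\vv_m^\sigma,\]
so $E_m$ and $E_m'$ are two translates of $A_m-\yy_m$ at sup-distance at most $2R_{m-1}$; in particular $\vol_{d-1}(\partial E_m)=\vol_{d-1}(\partial A_m)$.

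By \eqref{eq:discrepancy_in_Lambda_omega} the counts $\#(\Lambda_\eta\cap E_m)$ and $\#(\Lambda_\sigma\cap E_m')$ differ from $\#(\Lambda\cap A_m)$ and $\#(\Lambda\cap B_m)$ respectively by at most $c_3\vol_{d-1}(\partial A_m)$. Since $E_m\triangle E_m'\subset(\partial E_m)^{(+2R_{m-1})}$, the relative density of $\Lambda_\sigma$ combined with Lemma~\ref{lem:Lacz.Lemmas_2.1+2.2} gives
\[\bigl|\#(\Lambda_\sigma\cap E_m)-\#(\Lambda_\sigma\cap E_m')\bigr|\le C\,R_{m-1}^{\,d}\,\vol_{d-1}(\partial A_m)\]
for a constant $C$ depending only on $d$ and on the packing radius inherited from $\X$. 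Feeding these into the triangle inequality together with $|\#(\Lambda\cap A_m)-\#(\Lambda\cap B_m)|=\mu_m\vol_{d-1}(\partial A_m)$ from \eqref{eq:mu_m_def} yields
\[\frac{\bigl|\#(\Lambda_\eta\cap E_m)-\#(\Lambda_\sigma\cap E_m)\bigr|}{\vol_{d-1}(\partial E_m)}\ge \mu_m-2c_3-C\,R_{m-1}^{\,d},\]
and by \eqref{eq:mu_m_propery} the right-hand side tends to $\infty$ as $m\to\infty$ in $I$. Theorem~\ref{thm:non-BD_condition} then concludes that $\Lambda_\eta$ and $\Lambda_\sigma$ are BD-non-equivalent.

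\textbf{The main obstacle} is the mismatch between $E_m$ and $E_m'$: had these coincided, the conclusion would follow immediately from \eqref{eq:discrepancy_in_Lambda_omega} and \eqref{eq:mu_m_def}. Instead, the ``centerings'' $\uu_m^\eta$ and $\uu_m^\sigma$ provided by Proposition~\ref{thm:constructing_Delone_sets_as_limits_in_CF} can differ by as much as $2R_{m-1}$, producing a boundary-layer error of order $R_{m-1}^{\,d}\vol_{d-1}(\partial A_m)$. Absorbing this error is precisely the reason for the fast-growth condition \eqref{eq:mu_m_propery} imposed on the subsequence extracted at the start of this section; once it is in force, the rest of the argument is essentially bookkeeping.
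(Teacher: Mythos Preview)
Your proof is correct and follows essentially the same route as the paper's: center on $E_m=C_m^\eta-\uu_m^\eta$ (the paper's $F_m$), apply \eqref{eq:discrepancy_in_Lambda_omega} once for each word, control the $\le 2R_{m-1}$ translation mismatch via Lemma~\ref{lem:Lacz.Lemmas_2.1+2.2}, and finish with \eqref{eq:mu_m_def}--\eqref{eq:mu_m_propery} and Theorem~\ref{thm:non-BD_consequence}. One small slip: the bound $\#\bigl(\Lambda_\sigma\cap(E_m\triangle E_m')\bigr)\le C\,R_{m-1}^{\,d}\,\vol_{d-1}(\partial A_m)$ uses the \emph{uniform discreteness} (separation constant) of $\Lambda_\sigma$, not its relative density or packing radius.
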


\begin{proof}
	Taking a subsequence if necessary, we may assume without lose of generality that $\eta$ and $\sigma$ are everywhere different.
	We use an upper index of $\eta$ or $\sigma$ on elements of $\QQ_d^*$ and on vectors, e.g. $C_m^\eta$ and $\uu_m^\sigma$, to distinguish between those elements that come from the construction of $\Lambda_\eta$ and of $\Lambda_\sigma$ in Lemma \ref{lem:constructing_Lambda_omega}.  
	
	Denote $F_m \df C_m^\eta - \uu_m^\eta$. By \eqref{eq:discrepancy_in_Lambda_omega} for $w=\eta$ we obtain
	\begin{equation}\label{eq:estimating_F_m_cap_Lambda_eta}
		\forall m\ge 2: \quad 
		\absolute{\#(\Lambda_\eta \cap F_m) - \#(\Lambda \cap C_m^\eta)} \le c_3\cdot\vol_{d-1}({\partial F_m}).
	\end{equation}
	Observe that for every $m\ge 2$  there exists some $\vv_m\in\R^d$ so that 
	\begin{equation}\label{eq:v_m_def}
		[F_m - \vv_m = ]\quad(C_m^\eta - \uu_m^\eta) - \vv_m = C_m^\sigma - \uu_m^\sigma \quad\text{ and }\quad \norm{\vv_m}_\infty \le 2R_{m-1}.	
	\end{equation}
	 Indeed, assume without loss of generality that $\eta(m) = A$ and $\sigma(m) = B$. Combining \eqref{eq:B_m,p_m,q_m_def},  \eqref{eq:C_m_def} and \eqref{eq:u_m_def-constructing_Lambda_omega} yields that $C_m^\eta = A_m, C_m^\sigma = A_m + \xx_m$, $\uu_m^\eta \in B(\yy_m, R_{m-1})$ and $\uu_m^\sigma \in B(\yy_m + \xx_m, R_{m-1})$, which implies \eqref{eq:v_m_def}. It follows that
	\[
		\forall m\ge 2: \quad 
		(C_m^\sigma - \uu_m^\sigma)\triangle F_m \subset \partial F_m^{(+2 R_{m-1})},
	\] 
	and hence by \eqref{eq:Lacz.Lemmas_2.1+2.2}
	\[
		\forall m\ge 2: \quad 
		\absolute{\#(\Lambda_\sigma \cap F_m) - \#(\Lambda_\sigma \cap (C_m^\sigma - \uu_m^\sigma))} \le c_4 \cdot R_{m-1}^d\cdot \vol_{d-1}({\partial F_m}),
	\]
	where $c_4$ depends on $d$ and on $r(\Lambda)$. Again by \eqref{eq:discrepancy_in_Lambda_omega}, this time with $w=\sigma$, we obtain 
	\begin{equation*}\label{eq:estimating_F_m_cap_Lambda_sigma}
			\forall m\ge 2: \quad 
			\absolute{\#(\Lambda_\sigma \cap F_m) - \#(\Lambda \cap C_m^\sigma)} \le \left(c_3+c_4 \cdot R_{m-1}^d \right)\vol_{d-1}({\partial F_m}).
	\end{equation*}
	Combining this with \eqref{eq:estimating_F_m_cap_Lambda_eta}, the triangle inequality yields that for every $m\ge 2$ 
	\begin{equation*}\label{eq:final_discrepancy_estimate1}
	\begin{aligned}
		&\absolute{\#(\Lambda_\eta \cap F_m) - \#(\Lambda_\sigma \cap F_m)} \ge 
		\\
		&\absolute{\#(\Lambda \cap C_m^\eta) - \#(\Lambda \cap C_m^\sigma)} - \absolute{\#(\Lambda_\eta \cap F_m) - \#(\Lambda \cap C_m^\eta)}  -
		\absolute{\#(\Lambda_\sigma \cap F_m) - \#(\Lambda \cap C_m^\sigma)} \ge
		\\
		&\absolute{\#(\Lambda \cap C_m^\eta) - \#(\Lambda \cap C_m^\sigma)} - c_5\cdot R_{m-1}^d \cdot \vol_{d-1}(\partial F_m),
	\end{aligned}
	\end{equation*}
	where $c_5$ depends on $d$ and $r(\Lambda)$. 
	Since $C_m^\eta = A_m$, $C_m^\sigma = A_m + \xx_m$ and $\vol_{d-1}(\partial A_m) = \vol_{d-1}(\partial F_m)$, combined with \eqref{eq:mu_m_def} we have 
	\begin{equation*}\label{eq:final_discrepancy_estimate3}
	\begin{aligned}
		\absolute{\#(\Lambda_\eta \cap F_m) - \#(\Lambda_\sigma \cap F_m)} \ge \left(\mu_m - c_5\cdot R_{m-1}^d\right) \vol_{d-1}(\partial F_m),
	\end{aligned}
	\end{equation*}
	and together with \eqref{eq:mu_m_propery} we thus obtain 
	\[
		\frac{\absolute{\#(\Lambda_\eta \cap F_m) - \#(\Lambda_\sigma \cap F_m)}}{\vol_{d-1}(\partial F_m)} \ge 
		\mu_m\left(1 - \frac{c_5\cdot R_{m-1}^d}{\mu_m}\right)
		\xrightarrow{m\to\infty} \infty.
	\]
	Theorem \ref{thm:non-BD_consequence} then implies that the  sets $\Lambda_\eta$ and $\Lambda_\sigma$ are BD-non-equivalent, as required. 
\end{proof}

\begin{proof}[Proof of Theorem \ref{thm:main_result_general}]
	Let $\X$ be a minimal space of Delone sets. If there exists a uniformly spread $\Lambda\in\X$, then as noted in \S \ref{sec:introduction} every $\Lambda \in\X$ is uniformly spread, and (1) holds. 
	
	Otherwise, there exists some $\Lambda \in\X$ that is non-uniformly spread. Consider the equivalence relation on $\{A,B\}^\N$ in which $\eta \sim \sigma$ if $\eta$ and $\sigma$ differ in only finitely many places, and let $\Omega\subset \{A,B\}^\N$ be a set of equivalence class representatives. Since every equivalence class in this relation is countable, $\absolute{\Omega} = \cont$. For every two distinct words $\eta, \sigma \in \Omega$, Lemma \ref{lem:distinct_seq.==>non-BD_Delone_sets} implies that $\Lambda_\eta$ and $\Lambda_\sigma$ are BD-non-equivalent, therefore $\bdX \ge \cont$. As explained in \S \ref{sec:introduction} the upper bound is trivial, and so the proof is complete.  
\end{proof}

\appendix \section{$D(\cdot,\cdot)$ is a metric}\label{Appendix}
It is known that the function $D(\cdot,\cdot)$ in \eqref{eq:CF-metric} constitutes a metric on $\CCC(\R^d)$ when it is capped by $1/\sqrt{2}$ instead of $1$, see e.g. \cite[\S 7]{Lee-Solomyak}. We show that it is indeed a metric also when capped by $1$. The proof is essentially the same. 

\begin{prop}
	The function $D(\cdot,\cdot)$ in \eqref{eq:CF-metric} is a metric on $\CCC(\R^d)$. 
\end{prop}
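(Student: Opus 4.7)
Nonnegativity and symmetry are immediate from the symmetric definition \eqref{eq:CF-metric}, so the work lies in identity of indiscernibles and the triangle inequality. For the former, if $\Lambda_0=\Lambda_1$ it is obvious that $D(\Lambda_0,\Lambda_1)=0$; conversely, if $D(\Lambda_0,\Lambda_1)=0$, then for every $\varepsilon>0$ and every $\xx\in\Lambda_0$ with $\norm{\xx}_\infty<1/\varepsilon$ one has $\mathrm{dist}(\xx,\Lambda_1)\le\varepsilon$. Letting $\varepsilon\to 0$ and using that $\Lambda_1$ is closed yields $\xx\in\Lambda_1$; hence $\Lambda_0\subset\Lambda_1$, and the reverse inclusion follows by symmetry.

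The main step is the triangle inequality $D(\Lambda_0,\Lambda_2)\le D(\Lambda_0,\Lambda_1)+D(\Lambda_1,\Lambda_2)$. Set $\alpha\df D(\Lambda_0,\Lambda_1)$ and $\beta\df D(\Lambda_1,\Lambda_2)$. If $\alpha+\beta\ge 1$ there is nothing to prove since $D\le 1$, so I will assume $\alpha+\beta<1$. First I will observe the monotonicity property that if the pair of inclusions from \eqref{eq:CF-metric} holds for some $\varepsilon$, it also holds for every $\varepsilon'\ge\varepsilon$ (because $B(\mathbf{0},1/\varepsilon')\subset B(\mathbf{0},1/\varepsilon)$ and $\Lambda_i^{(+\varepsilon)}\subset\Lambda_i^{(+\varepsilon')}$); this means that for any $\alpha'>\alpha$ and $\beta'>\beta$ with $\alpha'+\beta'<1$ the corresponding inclusions hold for the pairs $(\Lambda_0,\Lambda_1)$ and $(\Lambda_1,\Lambda_2)$.

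With such $\alpha',\beta'$ fixed, set $\gamma\df\alpha'+\beta'$ and take $\xx\in\Lambda_0\cap B(\mathbf{0},1/\gamma)$. Using the first pair of inclusions (applicable since $1/\gamma\le 1/\alpha'$), I get $\yy\in\Lambda_1$ with $\norm{\xx-\yy}_\infty\le\alpha'$, hence $\norm{\yy}_\infty<1/\gamma+\alpha'$. The key algebraic inequality is
\[
\tfrac{1}{\alpha'+\beta'}+\alpha'\ \le\ \tfrac{1}{\beta'},\qquad\text{which is equivalent to}\qquad \beta'(\alpha'+\beta')\le 1,
\]
and this holds because $\alpha'+\beta'<1$ forces $\beta'<1$. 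Consequently $\norm{\yy}_\infty<1/\beta'$, so I can apply the second pair of inclusions to find $\zz\in\Lambda_2$ with $\norm{\yy-\zz}_\infty\le\beta'$. The triangle inequality in $\R^d$ then gives $\norm{\xx-\zz}_\infty\le\alpha'+\beta'=\gamma$, so $\xx\in\Lambda_2^{(+\gamma)}$. The symmetric argument starting from $\Lambda_2\cap B(\mathbf{0},1/\gamma)$ yields the reverse inclusion, so $D(\Lambda_0,\Lambda_2)\le\gamma=\alpha'+\beta'$. Letting $\alpha'\downarrow\alpha$ and $\beta'\downarrow\beta$ completes the argument.

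The tightest point, and the one I expect to be the main obstacle, is verifying that the cap at $1$ (rather than $1/\sqrt 2$, as in \cite{Lee-Solomyak}) still permits the key inequality $\beta'(\alpha'+\beta')\le 1$; with the smaller cap this estimate has a lot of slack, whereas with cap $1$ one must use exactly the assumption $\alpha+\beta<1$ together with the monotonicity remark to produce approximants with $\alpha'+\beta'<1$.
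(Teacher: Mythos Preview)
Your argument is correct and follows essentially the same route as the paper: reduce to $\alpha+\beta<1$, then verify the key inequality $\frac{1}{\alpha'+\beta'}+\alpha'\le\frac{1}{\beta'}$ (equivalently $\beta'(\alpha'+\beta')\le 1$), which is exactly the paper's computation $\frac{1}{\delta}-\frac{1}{\varepsilon+\delta}-\varepsilon=\frac{\varepsilon(1-\delta(\varepsilon+\delta))}{\delta(\varepsilon+\delta)}>0$. Your treatment is in fact slightly more careful than the paper's, since you introduce approximants $\alpha'>\alpha$, $\beta'>\beta$ and invoke monotonicity rather than assuming the defining inclusions hold at the infimum itself; you also spell out the identity of indiscernibles, which the paper skips.
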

\begin{proof}
	The triangle inequality is the only property that is not immediate. Let $X, Y, Z\in\CCC(\R^d)$ be three closed sets and let $\varepsilon, \delta>0$ so that 
	\begin{equation}\label{eq:triangle_ineq-assumption}
	D(X,Y)\le \varepsilon \text{ and } D(Y,Z)\le \delta.
	\end{equation}
	We must show that $D(X,Z)\le \varepsilon+\delta$. Clearly this is true if $\varepsilon+\delta\ge1$, and so we may assume in what follows that $\varepsilon+\delta<1$. We have 
	\begin{equation}\label{eq:triangle_ineq-1}
	X\cap B\left({\bf 0}, \frac{1}{\varepsilon+\delta}\right) = 
	X\cap B\left({\bf 0}, \frac{1}{\varepsilon}\right) \cap B\left({\bf 0}, \frac{1}{\varepsilon+\delta}\right) \stackrel{\eqref{eq:triangle_ineq-assumption}}\subset 
	Y^{(+\varepsilon)} \cap B\left({\bf 0}, \frac{1}{\varepsilon+\delta}\right)
	\end{equation}
	Note that since $\delta< \varepsilon+\delta<1$, the expression
	\[
	\frac{1}{\delta} - \frac{1}{\varepsilon+\delta} - \varepsilon = \frac{\varepsilon+\delta - \delta - \varepsilon\delta(\varepsilon+\delta)}{\delta(\varepsilon+\delta)} = \
	\frac{\varepsilon(1-\delta(\varepsilon+\delta))}{\delta(\varepsilon+\delta)}
	\]
is positive, and so
$\varepsilon + \frac{1}{\varepsilon+\delta} < \frac{1}{\delta}$.
By the triangle inequality
, if $B(\xx,\varepsilon)\cap B\left({\bf 0},\frac{1}{\varepsilon+\delta}\right) \neq \varnothing$ then 
$\rho({\bf 0},\xx)<\varepsilon + \frac{1}{\varepsilon+\delta} <\frac{1}{\delta}$.
Therefore 
\[
Y^{(+\varepsilon)} \cap B\left({\bf 0}, \frac{1}{\varepsilon+\delta}\right)
\subset \left(Y\cap B\left({\bf 0},\frac{1}{\delta}\right)\right)^{(+\varepsilon)}\stackrel{\eqref{eq:triangle_ineq-assumption}}\subset \left(Z^{(+\delta)}\right)^{(+\varepsilon)} \subset Z^{(+(\varepsilon+\delta))},
\]
Combining this with \eqref{eq:triangle_ineq-1} finishes the proof. 
\end{proof}

\end{document}